\newtheorem{theorem}{Theorem}[section]
\newtheorem{lemma}[theorem]{Lemma}
\theoremstyle{definition}
\newtheorem{definition}[theorem]{Definition}
\newtheorem{example}[theorem]{Example}
\newtheorem{proposition}[theorem]{Proposition}
\theoremstyle{remark}
\newtheorem{remark}[theorem]{Remark}
\numberwithin{equation}{section}
\DeclareMathOperator*{\argmin}{arg\,min}
\newcommand{\be}{\begin{equation}}
\newcommand{\ee}{\end{equation}}
\title{Second-order optimality conditions for bilevel
programs\thanks{\baselineskip 9pt This work was supported by National Key R\&D Program of China under project No. 2022YFA1004000.}}
\author{Xiang Liu\thanks{\baselineskip 9pt School of Mathematical Sciences, Dalian University of Technology, Dalian 116024, China. E-mail: liuxiang@mail.dlut.edu.cn}, \
Mengwei Xu\thanks{\baselineskip 9pt Institute of Mathematics, Hebei University of Technology, Tianjin 300401, China.
E-mail: xumengw@hotmail.com. The
research of this author was supported by the National Natural
Science Foundation of China under project No. 12071342 and the Natural Science Foundation of Hebei Province under project No. A2020202030} \   \ and \ Liwei Zhang\thanks{\baselineskip 9pt
School of Mathematical Sciences, Dalian University of Technology, Dalian 116024, China. E-mail: lwzhang@dlut.edu.cn. The
research of this author was supported by the National Natural Science Foundation of China under project  No. 11971089 and partially supported by Dalian High-level Talent Innovation Project under project  No. 2020RD09. }
}
\begin{document}
\maketitle
\begin{abstract}
 Second-order optimality conditions of the
bilevel programming problems are  dependent on the second-order
directional derivatives of the value functions or the solution
mappings of the lower level problems under some regular conditions,
which can not be calculated or evaluated. 
To overcome this difficulty, we propose the notion of the bi-local solution.
%due to the implicit structure.
%   There have been many results about optimality conditions for
%bilevel programming problem. And many works involve value function of
%the lower level problem, which is nonsmooth and nonconvex in general.
%Such difficulty restricts the applications of second-order   optimality conditions.
%, thus we shall extend the definition of local optimality for bilevel
%programs called bi-local minimum.
 Under the Jacobian uniqueness conditions for the lower level
 problem, we prove that the bi-local solution is a local minimizer of certain one-level minimization problems. Basing on this property, the first-order necessary optimality conditions
 and second-order necessary and sufficient optimality conditions for the
bi-local optimal solution of a given bilevel program are established.
 %which we introduced in
%this paper. The bi-local solution coincides with
%which reduce to the usual local solutions under the convexity of the lower level problem.
 The second-order optimality conditions proposed here
only involve second-order derivatives of the defining functions of the bilevel problem. 
The second-order sufficient optimality conditions are used to derive the   Q-linear convergence rate of the classical augmented
Lagrangian method.
\end{abstract}

{\bf 2020 Mathematics Subject Classification.} 90C26, 90C30, 90C46 \\

{\bf Keywords:}  Bilevel programs, first-order optimality conditions,
second-order optimality conditions, Jacobian uniqueness conditions,
augmented Lagrangian methods.\\

\newpage

\baselineskip 18pt
\parskip 2pt

\section{Introduction}
Bilevel programming problems originated from the economic game theory by Stackelberg in 1934 \cite{Stackelberg},
which has hierarchical structure that the constraint region  involves the graph of the solution set of the parametric problem, whose parameters are variables of the upper level problem.
%The related principal-agent problems are also called Stackelberg games which are modeled as hierarchical programs with two decision makers: a leader and its follower.
Bilevel optimization has a wide range of applications, including economics, finance and logistics, as well as hyper-parameter selection in the fast-growing area of machine learning. For a detailed introduction to bilevel optimization, see \cite{Shimizu 97, Bard 98, Dempe 02, Dempe 15, Dempe 20, Kunapuli 08} and references therein.

In this paper, we consider the following bilevel optimization problem:
\begin{eqnarray*}
(BP)~~~~\min && F(x,y)\\
{\rm s.t.}&& H(x,y)= 0,\\
&& G(x,y)\le 0,\\
&& y\in S(x),
\end{eqnarray*}
where $ S(x)$ denotes the solution set of the lower level problem:
\begin{eqnarray*}
(P_x)~~~~\min && f(x,y)\\
{\rm s.t.} && y\in Y(x).
%h(x,y)= 0,\\
%&& g(x,y)\le 0,
\end{eqnarray*}
Here $Y(x):=\{y\in \mathbb{R}^m: h(x,y)=0,\enspace g(x,y)\le0\}$
and $F:\mathbb{R}^n \times \mathbb{R}^m \to \mathbb{R} $, $G :\mathbb{R}^n\times \mathbb{R}^m\to \mathbb{R}^q $, $H :\mathbb{R}^n\times \mathbb{R}^m\to \mathbb{R}^p $,
$f:\mathbb{R}^n \times \mathbb{R}^m \to \mathbb{R} $,  $g :\mathbb{R}^n\times \mathbb{R}^m\to \mathbb{R}^s $, $h :\mathbb{R}^n\times \mathbb{R}^m\to \mathbb{R}^r $.
Let
$
  \Phi:=\{(x,y)\in \mathbb{R}^n \times \mathbb{R}^m:H(x,y)=0,\enspace G(x,y)\le0\}
$ and denote the feasible region of (BP) by $\cal{F}$.
%$\Phi\subset \mathbb{R}^n \times \mathbb{R}^m$ is a set defined by
%and
%%$Y:\mathbb{R}^n\rightrightarrows \mathbb{R}^m$ is a set-valued mapping defined by
%\begin{equation*}
%  Y(x):=\{y\in \mathbb{R}^m: h(x,y)=0,\enspace g(x,y)\le0\}.
%\end{equation*}

The optimality conditions for bilevel programming problem mainly depend on different single-level reformulations.
Under the assumption that the lower level problem has a unique solution, i.e., $S(x)=\{y(x)\}$, the bilevel problem can be reformulated as a single-level problem by substituting the lower level solution into the upper level problem:
\begin{eqnarray*}
(SP)~~~~\min_x && F(x,y(x))\\
{\rm s.t.}&& H(x,y(x))= 0,\\
&& G(x,y(x))\le 0.
\end{eqnarray*}
 Due to the implicit structure of the lower level solution mapping,
Dempe et al. \cite{Dempe 92, Falk 95, Mehlitz 17, Outrata, Shimizu 97, Zaslavski 12} developed the first-order optimality conditions based on the differential properties of the solution mapping under different regularity conditions of the lower level problem.

In practice, people usually adopt the so-called first-order approach. The first-order approach is to replace
the constraint $y\in S(x)$ by the Karush–Kuhn–Tucker (KKT) conditions of the lower level problem and minimize the original variables and  the corresponding multipliers,
%solve the resulting mathematical program with complementarity constraints (MPCC),
see e.g. \cite{Dempe 92,Dempe 13}:
 \begin{eqnarray*}
(FP)~~~~~~\min_{x,y,\mu,\xi} && F(x,y)\\
{\rm s.t.}&& H(x,y)= 0,\\
&& G(x,y)\le 0,\\
&&\nabla_{y} \mathcal{L}(x;y,\mu,\xi)=0,\\
&&h(x,y)=0,\\
&&g(x,y)-\Pi_{\mathbb{R}_{-}^s}(g(x,y)+\xi)=0,
\end{eqnarray*}
where $\mathcal{L}(x;y,\mu,\xi):=f(x,y)+\mu^Th(x,y)+\xi^Tg(x,y)$
is the Lagrangian function of Problem $(P_x)$.
Dempe et al. \cite{Dempe 06} discussed the first-order necessary and sufficient optimality conditions for the bilevel problems with special structures via the contingent cone of the graph of the lower level solution set. It is showed in \cite[Theorem 3.2]{Dempe 12},
 the local solutions of (FP) and (BP) may not be related if the lower level problem is not convex or certain constraint qualifications fail.

Another approach is based on the value function reformulation. Outrata \cite{Outrata} proposed the value function reformulation for a numerical purpose by defining the optimal value function of the lower level problem.
In \cite{Dempe 07, Dempe 13, Dempe 14}, the authors analyzed the subgradients of value functions in variational analysis to derive necessary optimality conditions for bilevel programs.
 Ye and Zhu \cite{Ye 10} derived the first-order necessary optimality conditions for the bilevel programs by combining the first-order approach and the value function approach.
 Bai and Ye \cite{Ye 21} studied the directional necessary optimality conditions under the directional calmness condition.
In \cite{Aboussoror 18}, the authors derived global optimality conditions, which are expressed in terms of max–min conditions with linked constraints for bilevel optimization problems.

%which is expressed a max-min-equality with linked constraints. It may need to solve a minimax optimization problem which is essentially a bilevel optimization problem.
%Dempe and Zemkoho \cite{Dempe 13} developed necessary optimality conditions based on the value function reformulation of bilevel program and the assumption that the value function is locally convex.
%Dempe at al. \cite{Dempe 14} derived a new type upper subdifferential necessary optimality conditions for the pessimistic version of bilevel programs.

Although there are many results about the first-order optimality conditions for bilevel programming problems,
works on standard second-order optimality conditions are very limited.
 Falk and Liu \cite{Falk 95} discussed the second-order optimality conditions for the case where the upper level problem is unconstrained and the lower level problem  satisfies the strong second-order sufficient optimality conditions and the linear independence constraint qualification (LICQ).
 Dempe and Gadhi \cite{Dempe 10} studied second-order optimality conditions by using the first- and second-order approximations of the value function.
Ye and Zhu \cite{Ye 95} used the value function approach to establish the second-order sufficient optimality conditions (SOSC) for bilevel programs where the upper and lower level problems are both unconstrained.
The second-order necessary conditions were also derived in \cite{Ye 95} by assuming the lower level problem has a unique solution.
Mehlitz and Zemkoho \cite{Zemkoho 21} obtained first-order sufficient optimality conditions by estimating the tangent cone to the feasible set of the bilevel program and second-order sufficient conditions
by using second-order directional derivatives of all the involved functions as well as the value function of the lower level problem.

 The existing works for the second-order optimality conditions of bilevel programs were all based on the second-order (directional) derivatives of the solution mappings or the value functions of the lower level problems. However, due to the implicit structures,  it is hardly to calculate or evaluate   the second-order  (directional) derivatives of the solution mappings or the value functions.
 To deal with such difficulty, we consider the constrained bilevel problem (BP) and study the differentiability of the lower level solution mapping  under the  Jacobian uniqueness conditions for the lower level problem (see the Definition \ref{jaco}).
By assuming the Jacobian uniqueness conditions hold, we observe that the implicit function reformulation (SP) and first-order reformulation (FP) of the bilevel program are equivalent in the sense of local solutions, both of which equal to the so-called bi-local solution of the original problem (BP) (see Definition \ref{def 2.1}).
The bi-local solution we proposed in this paper uses the concept of local solution of the lower level problem, which reduces to the classical local solution under the  convexity of the lower level problem. It is shown that the first- and second-order optimality conditions of (SP) and (FP) are equivalent, respectively.
 Furthermore, the resulting second-order optimality conditions of (FP) do not involve any implicit items.
 As a result, the second-order sufficient optimality conditions can be used to obtain bi-local solutions of the bilevel problems and guarantee the convergence rates of many numerical algorithms, such as the classical augmented Lagrangian algorithm.

  We summarize the main contributions as follows:
  \begin{itemize}
    \item Under the Jacobian uniqueness conditions, the local solutions of (FP) coincide with the bi-local solutions of (BP), which indicates that it is reasonable to deal with (FP) even if the lower level problem is not convex.
    \item The second-order optimality conditions for the bi-local solutions, which are derived through the problem (FP), only involve the second-order derivatives of the defining functions of (BP), which are available beforehand.
  \end{itemize}

The remaining parts of this paper are organized as follows. In Section \ref{section 2}, we explore a new definition of the concept of local solution for the bilevel problem and study the differentiability of the solution mapping of the lower level problem in Section \ref{section 3}. Section \ref{section 4} develops two types of necessary and sufficient optimality conditions and derives primal-dual Q-linear convergence rate of the classical augmented Lagrangian algorithm. Section 5 concludes the paper.

We end this section by describing  the notation in this paper. We denote by
%$\mathbb{R}^n_{-}$, $\mathbb{R}^n_{+}$  the nonpositive and nonnegative orthants in $\mathbb{R}^n$, respectively.  $\{0\}^n$ denotes the origin of the space $\mathbb{R}^n$, $x_i$ denotes the i-th component of a vector $x$,
$\|\cdot\|$ the $l_2$-norm of a vector $x$ and by $I_{n}$ the $n\times n$ identity matrix, respectively.
 For a vector $x$, we denote $\boldsymbol{B}_{\delta}(x)=\{x':\parallel x'-x\parallel\le \delta\}$. For a convex set $D\subseteq \mathbb{R}^n$, we denote by $\Pi_D(w)$ the projection of $w$ onto $D$ and define $d(z,D)=\inf_{y\in D}\|z-y\|$ as the distance from $z$ to $D$. For vectors $u,v\in \mathbb{R}^n$, we use $u\bot v$ to denote $u^Tv=0$. For a cone $K\subset \mathbb{R}^q$, its polar cone is $K^{\circ}=\{v\in \mathbb{R}^q|u^Tv\le 0 \enspace\text{for all}\enspace u\in K\}$. We use the notation $\psi (t)=o(t)$ for any function $\psi: \mathbb{R}_+\rightarrow \mathbb{R}^q $ such that $\lim_{t\to 0_{+}}t^{-1}\psi(t)=0$.
   For a set-valued mapping $S:\mathbb{R}^n \rightrightarrows\mathbb{R}^m $,
  we denote the graph of $S$ by
${\rm gph} S(x)=\{(x,y)\in \mathbb{R}^n \times \mathbb{R}^m: y\in S(x)\}$.
 For a function $f :\mathbb{R}^n \times \mathbb{R}^m \to \mathbb{R} $, a mapping $g :\mathbb{R}^n \times \mathbb{R}^m \to \mathbb{R}^q $ and a mapping $y:\mathbb{R}^n\to \mathbb{R}^m$,
 for a fixed point $(x,y)$ with $y=y(x)$, we use $\nabla_x f(x,y(x))$ to denote $\nabla_x f(x,y)|_{y=y(x)}$; $\nabla_y f(x,y(x))$ to denote $\nabla_y f(x,y)|_{y=y(x)}$; $\nabla_{xx}^2 f(x,y(x))$ to denote $\nabla_{xx}^2 f(x,y)|_{y=y(x)}$;  $\nabla_{xy}^2 f(x,y(x))$ to denote $\nabla_{xy}^2 f(x,y)|_{y=y(x)}$;
 $ \mathcal{J}_x g(x,y(x))$ to denote $\mathcal{J}_x g(x,y)|_{y=y(x)}$ and $ \mathcal{J}_y g(x,y(x))$ to denote $\mathcal{J}_y g(x,y)|_{y=y(x)}$.

\section{Solution concepts}\label{section 2}
In nonlinear programming problems, a local optimal solution is defined in a certain sufficiently small neighborhood such that there is no better feasible point with lower  objective value.
While in bilevel programming we have to use an auxiliary problem to illustrate the definition.
Several different notions of local optimal solutions for bilevel programs were discussed in \cite[Section 5.1]{Dempe 02}.
Among them, the following concept is the most popular (see e.g. \cite{Ye 10}):
%In the previous literature, a global solution of the bilevel program is defined as the global minimizer of the upper level objective function subject to the set of points,
%% with the upper level variable and the solution of the parameterized problem (\ref{lp}) as their components.
%where one component belongs to the parameterized solution set by the other one, i.e.,
a point $(x^*,y^*)\in \Phi$ is called a local optimal solution for the problem (BP) if $y^*\in S(x^*)$ and
there exists $\delta>0$ such that
$$F(x^*,y^*)\leq F(x,y),\quad \forall\ (x,y)\in {\cal{F}}\cap \boldsymbol{B}_{\delta}(x^*,y^*).$$
%Similarly, a local solution is defined as the global solution restricted on ,
It is called a global optimal solution if $\delta =\infty$ can be selected.

It is well-known that the bilevel program is a difficult problem even when all defining functions are linear. The main difficulty is that the feasible region is implicit by the solution set of a parametric problem, which may be difficult to solve when the problem is nonconvex with respect to the lower level variable.
%and it can not be expected to find global optimal solutions since a nonconvex optimization problem is possibly being considered.
Another drawback of the concept of local optimal solution is that any isolated feasible point becomes a local solution automatically since
the intersection of the feasible region and any sufficiently small neighborhood is empty.

The following example shows that a global maximum point
of a bilevel problem becomes  a local solution if it is an isolated feasible point.

\begin{example}\label{Counterexample}
Consider the following bilevel problem:
\begin{eqnarray*}
\min && F(x,y):=y\\
{\rm s.t.} && x\in [-1,1],\\
&& y\in S(x):=\arg\min\limits_{y}\ f(x,y):=x^2+y^2\\
&&~~~~~~~~~~~~~~~~~~~~{\rm s.t.}\quad
g(x,y):=(x^2-y-1)(x^2+y^2-1)\leq 0.
\end{eqnarray*}
The feasible region of the lower level problem lies in the light part in Figure 1.
The feasible set of the bilevel problem is $\{(x,y):x\in [-1,1], y=x^2-1\}\cup \{(0,1)\}$, see the red curve and an isolated point in Figure 1.
The global optimal solution is $(x^*,y^*)=(0,-1)$ with the optimal value $-1$.
It is easy to see that $(0,1)$ is a local optimal solution since it is an isolated feasible point. In fact,  $(0,1)$ is a global maximum point with the objective value $F(0,1)=1$.
\begin{figure}
  \centering
    \includegraphics[width=0.6\textwidth]{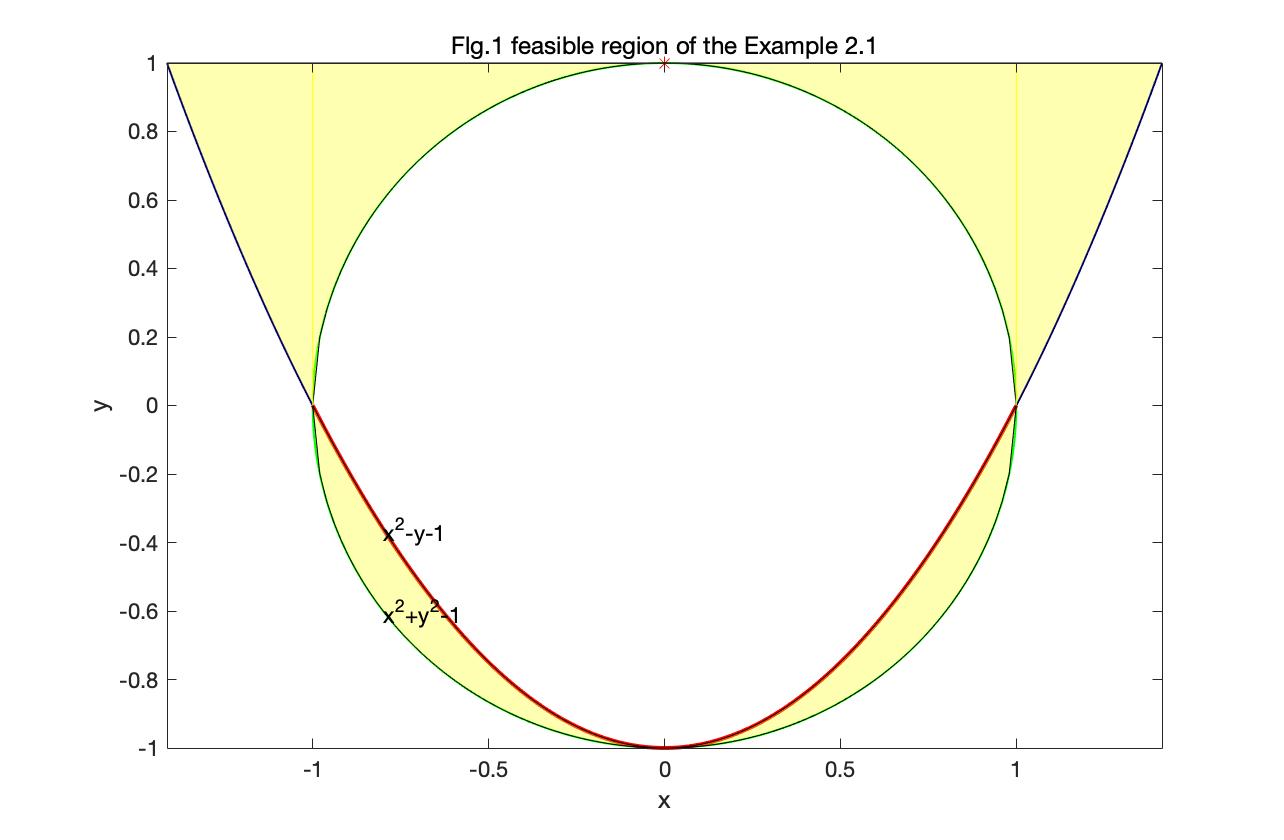}
   % \caption{ Feasible region of Example \ref{Counterexample}}
   \label{fig1}
\end{figure}

\end{example}

%Therefore, we consider the local solutions of the lower level problem instead of its global solutions. There has been some researches about that now.
The minimax optimization problem is essentially a bilevel programming problem.
Jin, Netrapalli and Jordan \cite{Jin} carefully considered a very basic question: what is a proper definition of local optima of a nonconvex-nonconcave minimax optimization problem.
They proposed a definition called local minimax point for unconstrained minimax optimization problem by using the concept of local solutions of the inner problem, which is applied to the constrained minimax problem by
Dai and Zhang \cite{Daiyh and Zhanglw}.
We now extend this definition to the bilevel problem.

\begin{definition}\label{def 2.1}
A point $\left(x^*, y^*\right) \in \mathbb{R}^n \times \mathbb{R}^m$ is said to be a bi-local optimal solution of (BP) if $(x^*, y^*)\in \Phi$,
$y^*\in Y(x^*)$ and there exist $\delta_0>0$ and a function $\eta:\left(0, \delta_0\right] \rightarrow \mathbb{R}_{+}$ satisfying $\eta(\delta) \rightarrow 0$ as $\delta \rightarrow 0$, such that for any $\delta \in\left(0, \delta_0\right]$,
$$f(x^*,y^*)\le f(x^*,z),\quad \forall z\in Y(x^*)\cap \boldsymbol{B}_{\eta(\delta)}(y^*), $$
and any $(x,y)\in \Phi \cap[\boldsymbol{B}_{\delta}(x^*)\times\boldsymbol{B}_{\eta(\delta)}(y^*)]$ satisfying
   $y\in {\rm argmin}_z\{ f(x,z):z\in Y(x)\cap\boldsymbol{B}_{\eta(\delta)}(y^*)\}$,
%$$ f(x,y)\le f(x,z) \quad \forall z\in Y(x)\cap\mathbb{B}_{\eta(\delta)}(y^*),$$
it follows that
$$ F(x^*,y^*)\le F(x,y).$$
\end{definition}
Obviously, $(0,1)$ is not a bi-local solution of the Example \ref{Counterexample} and $(0,-1)$ is the unique bi-local solution.

\begin{remark}\label{biloc}
Note that if $\left(x^*, y^*\right)\in \mathbb{R}^n \times \mathbb{R}^m$ is a bi-local solution of (BP) and $y^*\in S(x^*)$, it follows that $\left(x^*, y^*\right)$ is a local solution of (BP).

%We now show $\left(x^*, y^*\right)$ is a local solution of (BP).
%Since $(x^*,y^*)$ is the bi-level solution of (BP) and $y^*\in S(x^*)$, we know that $(x^*,y^*)\in \Phi\cap {\rm gph} S(x)$.
Indeed, if $(x^*,y^*)$ is isolated in ${\cal{F}}$, it is a local solution of (BP) automatically.
Otherwise we assume to the contrary that there exists an infinite sequence $(x^k,y^k)\in  \Phi$ converging to $\left(x^*, y^*\right)$ as $k\to \infty$ such that
$y^k\in S(x^k)$ and
\begin{equation}\label{con0}
F(x^k,y^k)<F(x^*,y^*).
\end{equation}
For any $\delta_0>0$ and  $\eta:\left(0, \delta_0\right] \rightarrow \mathbb{R}_{+}$ satisfying $\eta(\delta) \rightarrow 0$ as $\delta \rightarrow 0$, the condition $y^k\in S(x^k)$ implies that for any $\delta \in\left(0, \delta_0\right]$, there exists $k_{\delta}$ sufficiently large such that
$(x^{k_{\delta}},y^{k_{\delta}})\in \Phi\cap [\boldsymbol{B}_{\delta}(x^*)\times\boldsymbol{B}_{\eta(\delta)}(y^*)]$
and
$$ f(x^{k_{\delta}},y^{k_{\delta}})\le f(x^{k_{\delta}},z), \quad \forall z\in Y(x^{k_{\delta}})\cap \boldsymbol{B}_{\eta(\delta)}(y^*).$$
%$y\in argmin_z\{ f(x,z):z\in Y(x)\cap\mathbb{B}_{\eta(\delta)}(y^*)\}$.
%By the definition of the bi-level solution of (BP), we can derive that $F(x,y)\ge F(x^*,y^*)$, so it's also a local solution of (BP).
Then the condition (\ref{con0}) contradicts with the fact that $\left(x^*, y^*\right) $ is a bi-local solution of (BP).

\end{remark}

The following conclusion shows that the bi-local solution reduces to the so-called local solution when the lower level problem is convex in $y$.

\begin{theorem}\label{loc}
Let $\left(x^*, y^*\right) \in \mathbb{R}^n \times \mathbb{R}^m$ be a bi-local solution of (BP). Assume that the lower level problem $(P_{x})$ is convex at $x^*$ with respect to the variable $y$.
Then $\left(x^*, y^*\right)$ is a local solution of (BP).
Conversely, let $\left(x^*, y^*\right) \in \mathbb{R}^n \times \mathbb{R}^m$ be a global/local solution of (BP)
around which $f,g,h$ are  continuous.
If the solution mapping $S(x)$ of the lower level problem $(P_{x})$ is continuous at $x^*$,
 then $\left(x^*, y^*\right)$ is a bi-local solution of (BP).
\end{theorem}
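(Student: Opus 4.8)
The plan is to prove the two implications of Theorem~\ref{loc} separately, each time reducing to checking the definitions against one another and exploiting the continuity hypotheses.

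For the first (forward) implication, suppose $(x^*,y^*)$ is a bi-local solution and the lower level problem is convex in $y$ at $x^*$. In view of Remark~\ref{biloc}, it suffices to show that $y^*\in S(x^*)$, i.e.\ that $y^*$ is a \emph{global} minimizer of $f(x^*,\cdot)$ over $Y(x^*)$, not merely a minimizer over a small ball. By the definition of bi-local solution, $y^*$ minimizes $f(x^*,\cdot)$ over $Y(x^*)\cap\boldsymbol{B}_{\eta(\delta)}(y^*)$ for every $\delta\in(0,\delta_0]$; in particular $y^*$ is a local minimizer of $f(x^*,\cdot)$ on $Y(x^*)$. Since $f(x^*,\cdot)$ is convex and $Y(x^*)$ is a convex set (both guaranteed by the convexity hypothesis), a local minimizer over a convex set is automatically a global minimizer, so $y^*\in S(x^*)$. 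Then Remark~\ref{biloc} gives that $(x^*,y^*)$ is a local solution of (BP), completing this direction.

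For the converse, suppose $(x^*,y^*)$ is a local solution of (BP), with $f,g,h$ continuous near $(x^*,y^*)$ and $S(\cdot)$ continuous at $x^*$. I must produce $\delta_0>0$ and $\eta:(0,\delta_0]\to\mathbb{R}_+$ with $\eta(\delta)\to0$ such that the two requirements of Definition~\ref{def 2.1} hold. First, since $(x^*,y^*)$ is a local solution, $y^*\in S(x^*)$, so $f(x^*,y^*)\le f(x^*,z)$ for all $z\in Y(x^*)$, which a fortiori gives the first (inner) inequality for any choice of $\eta$. For the second requirement, fix the $\delta^*>0$ from the definition of local solution so that $F(x^*,y^*)\le F(x,y)$ for all $(x,y)\in\mathcal{F}\cap\boldsymbol{B}_{\delta^*}(x^*,y^*)$. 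Now I need to choose $\eta$ so that any $(x,y)\in\Phi$ with $x\in\boldsymbol{B}_\delta(x^*)$, $y\in\boldsymbol{B}_{\eta(\delta)}(y^*)$ and $y\in\mathrm{argmin}_z\{f(x,z):z\in Y(x)\cap\boldsymbol{B}_{\eta(\delta)}(y^*)\}$ actually satisfies $y\in S(x)$ (hence lies in $\mathcal{F}$) and lies in $\boldsymbol{B}_{\delta^*}(x^*,y^*)$. The latter is immediate once $\delta$ and $\eta(\delta)$ are both at most $\delta^*/2$. The former is where the continuity of $S$ enters: by continuity of $S$ at $x^*$ (in particular inner semicontinuity), for every $\varepsilon>0$ there is a neighborhood of $x^*$ on which $S(x)\cap\boldsymbol{B}_\varepsilon(y^*)\neq\emptyset$; combined with outer semicontinuity and continuity of $f$, one shows that for $x$ close to $x^*$ a point $y$ that minimizes $f(x,\cdot)$ over the \emph{restricted} feasible set $Y(x)\cap\boldsymbol{B}_{\eta(\delta)}(y^*)$ and is close to $y^*$ must in fact be a global minimizer over all of $Y(x)$, i.e.\ $y\in S(x)$. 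Define $\eta(\delta)$ by shrinking it, as $\delta\downarrow0$, fast enough that this comparison argument goes through on the $\delta$-ball around $x^*$.

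The main obstacle is the last point in the converse: ruling out the possibility that the ``localized'' lower level minimizer $y$ fails to be a true solution in $S(x)$ because $f(x,\cdot)$ attains a strictly smaller value at some $z\in Y(x)$ far from $y^*$. The resolution is to use inner semicontinuity of $S$ to pick $\hat z(x)\in S(x)$ with $\hat z(x)\to y^*$, so that $\hat z(x)$ eventually lies in the ball $\boldsymbol{B}_{\eta(\delta)}(y^*)$; then $f(x,y)\le f(x,\hat z(x))=\min_{Y(x)}f(x,\cdot)$ by definition of $y$, forcing $f(x,y)$ to equal the global lower level optimal value and hence $y\in S(x)$. One must be careful to order the quantifiers correctly — choosing $\delta_0$ and the function $\eta$ up front, then verifying the inclusion for all smaller $\delta$ — and to invoke only the continuity of $S$ at $x^*$, which by definition packages both the inner and outer semicontinuity needed above.
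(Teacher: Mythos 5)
Your proposal is correct and follows essentially the same route as the paper: the forward direction upgrades $y^*$ from a local to a global lower-level minimizer via convexity (the paper does this explicitly with a convex combination $\alpha_t\widetilde y+(1-\alpha_t)y^*$), and the converse defines $\eta(\delta)$ from the continuity of $S$ (the paper takes $\eta(\delta)=\sup_{\|x-x^*\|\le\delta}\inf_{y\in S(x)}\|y-y^*\|$) and concludes via the inclusion $\mathrm{argmin}_z\{f(x,z):z\in Y(x)\cap\boldsymbol{B}_{\eta(\delta)}(y^*)\}\subseteq S(x)$. Your comparison with a nearby point $\hat z(x)\in S(x)$ is exactly the justification for that inclusion, which the paper asserts without spelling out.
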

\begin{proof} Let $\left(x^*, y^*\right) \in \mathbb{R}^n \times \mathbb{R}^m$ be a bi-local solution of (BP). From the Remark \ref{biloc}, we only need to show that under the convexity of the lower level problem, $y^*$ must be a global solution of the problem $(P_{x^*})$, i.e., $y^*\in S(x^*)$.  Otherwise we select $\widetilde{y}\in Y(x^*)$ such that $f(x^*,\widetilde{y})<f(x^*,y^*)$.
For $\alpha_t\in [0,1]$, let $y^t:=\alpha_t \widetilde{y} +(1-\alpha_t) y^*\in Y(x^*)$ from the convexity.
Moreover,
\begin{align*}
      f(x^*,y^t)\leq \alpha_t f(x^*,\widetilde{y})+(1-\alpha_t) f(x^*,y^*)<  f(x^*,y^*).
\end{align*}
By setting $\alpha_t\to 0$, $y^t\in \boldsymbol{B}_{\eta(\delta)}(y^*)$, which is a contradiction with the definition of the bi-local solution. Thus $y^*\in S(x^*)$ and hence $\left(x^*, y^*\right)$ is a local solution of (BP).

Conversely, since $S(x)$ is continuous at $x^*$ and $y^*\in S(x^*)$, we have $d(y^*,S(x))\to 0$, as $x\to x^*$.
Then we denote that
$\eta (\delta):=\sup\limits_{\|x-x^*\|\le \delta}\inf\limits_{y\in S(x)}\|y -y^*\|$
, which satisfies $\eta(\delta) \rightarrow 0$ as $\delta \rightarrow 0$. Since $(x^*, y^*)$ is a global/local solution of (BP), there exist $\delta_0>0$, for any $\delta \in(0, \delta_0]$ and any $(x,y)\in \Phi\cap {\rm gph}S(x) \cap[\boldsymbol{B}_{\delta}(x^*)\times\boldsymbol{B}_{\eta(\delta)}(y^*)]$,
it follows that
$$ F(x^*,y^*)\le F(x,y).$$
This indicates $(x^*,y^*)$ is a bi-local solution of (BP) by $\argmin_z\{f(x,z):z\in Y(x)\cap \boldsymbol{B}_{\eta(\delta)}(y^*)\}\subseteq S(x)$.
We complete the proof.
\end{proof}

\section{Differentiability  of the solution mapping of the lower level problem}\label{section 3}
%The Lagrangian function of Problem $(P_x)  $ is defined by
%$$\mathcal{L}(x;y,\mu,\xi)=f(x,y)+\mu^Th(x,y)+\xi ^Tg(x,y),$$
%where $(\mu,\xi)\in \mathbb{R}^r\times\mathbb{R}^s$.
%For a point $x\in \mathbb{R}^n$ around $x^*$, we consider the lower level problem:
%\begin{eqnarray*}
%(P_x)~~~~\min_y && f(x,y)\\
%{\rm s.t.} && h(x,y)= 0,\\
%&& g(x,y)\le 0.
%\end{eqnarray*}
In this section, for a given point $x^*$, we study the continuity property of the solution mapping of the lower level problem under the Jacobian uniqueness conditions for the problem $(P_{x^*})$ at a certain point $(y^*, \mu^*,\xi^*)$.
%denote by $\Lambda_{x^*}(y^*)$ the set of all
Note that $(\mu^*,\xi^*)\in \mathbb{R}^r\times\mathbb{R}^s$ is the corresponding Lagrangian multiplier such that
the KKT conditions hold at $y^*$ for Problem $(P_{x^*})$, i.e.,
 \begin{eqnarray*}
 && \nabla_y \mathcal{L}\left(x^*; y^*, \mu^*,\xi^*\right)=0,\\
&& h\left(x^*, y^*\right)=0,\\
&& 0\le \xi^*\bot g(x^*,y^*)\leq 0.
\end{eqnarray*}
Recall that $\mathcal{L}(x;y,\mu,\xi):=f(x,y)+\mu^Th(x,y)+\xi^Tg(x,y)$.

In order to state the optimality conditions for (BP), we require the following conditions.
\begin{definition}\label{jaco} Let $\left( \mu^*,\xi^*\right) \in \mathbb{R}^r \times \mathbb{R}^s$ be a point. We say that the Jacobian uniqueness conditions of Problem $\left(\mathrm{P}_{x^*}\right)$ are satisfied at $\left(y^*, \mu^*,\xi^*\right)$ if
\begin{itemize}
  \item [(a)] The point $(y^*, \mu^*,\xi^*)$ is a KKT pair of Problem $\left(\mathrm{P}_{x^*}\right)$.

  \item [(b)] The LICQ holds at $y^*$; namely, the set of vectors
$$
\left\{\nabla_y h_1(x^*, y^*), \cdots, \nabla_y h_r(x^*, y^*)\right\}\cup\{\nabla_y g_i(x^*, y^*):i\in I_{x^*}(y^*)\}
$$
is linearly independent, where $I_{x^*}(y^*)=\{i=1,\cdots,s:g_i(x^*,y^*)=0\}$.
\item [(c)] The strict complementarity condition holds at $y^*$ for $\xi^*$; namely,
$$ \xi^*_i-g_i(x^*,y^*)>0, \quad i=1,\cdots, s. $$
\item [(d)] The SOSC holds at $(y^*, \mu^*,\xi^*)$; namely,
$$
\left\langle\nabla_{y y}^2 \mathcal{L}\left(x^*; y^*, \mu^*,\xi^*\right) d_y, d_y\right\rangle>0, \quad \forall d_y \in \mathcal{C}_{x^*}(y^*)\backslash\{0\}, $$
where $\mathcal{C}_{x^*}(y^*)$ is the critical cone of Problem $(P_{x^*})$ at $y^*$,
\begin{align*}
  \mathcal{C}_{x^*}(y^*)=\{d_y\in \mathbb{R}^m: &\mathcal{J}_y h(x^*, y^*)d_y=0;\nabla_y g_i(x^*,y^*)^Td_y\le 0,i\in I_{x^*}(y^*);\\
  &\nabla_y f(x^*,y^*)^Td_y\le 0 \}  .
\end{align*}
\end{itemize}
\end{definition}

Recall the following results from \cite[Theorem 2.1]{Robinson 74}.
\begin{lemma}\label{lemma-IFT}
 Let $(x^*, y^*) \in \mathbb{R}^n \times \mathbb{R}^m$ be a point around which $f,g,h$ are  continuously differentiable and
twice continuously differentiable with respect to the variable $y$. %Let $(\mu^*,\xi^*) \in \mathbb{R}^r\times \mathbb{R}^s$ such that
 Assume that the Jacobian uniqueness conditions of Problem $(\mathrm{P}_{x^*})$ are satisfied at $(y^*, \mu^*,\xi^*)$. Then there exist $\delta_1>0$, $\varepsilon_1>0$, and a
 continuously differentiable mapping $(y, \mu,\xi): \boldsymbol{B}_{\delta_1}(x^*) \rightarrow \boldsymbol{B}_{\varepsilon_1}(y^*) \times\boldsymbol{B}_{\varepsilon_1}(\mu^*)\times\boldsymbol{B}_{\varepsilon_1}(\xi^*)$ such that the Jacobian uniqueness conditions of Problem $(\mathrm{P}_x)$ are satisfied at $(y(x), \mu(x),\xi(x))$ when $x \in \boldsymbol{B}_{\delta_1}(x^*)$.
\end{lemma}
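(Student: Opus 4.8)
The plan is to invoke the classical sensitivity analysis of Robinson directly, since Lemma~\ref{lemma-IFT} is essentially a restatement of \cite[Theorem 2.1]{Robinson 74}. First I would write the KKT system of Problem $(\mathrm{P}_x)$ as a parametrized nonlinear equation. Under the strict complementarity condition (d) in Definition~\ref{jaco}, the active set $I_{x^*}(y^*)$ is the \emph{full} constraint set for the equality part while the inactive inequality multipliers vanish; more precisely, since every $g_i$ has either $g_i(x^*,y^*)<0$ (hence $\xi_i^*=0$) or $g_i(x^*,y^*)=0$ (hence $\xi_i^*>0$), the projection-type or complementarity equation $g(x,y)-\Pi_{\mathbb R_-^s}(g(x,y)+\xi)=0$ splits locally into the smooth equations $g_i(x,y)=0$ for $i\in I_{x^*}(y^*)$ and $\xi_i=0$ for $i\notin I_{x^*}(y^*)$. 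So the whole KKT system reduces, in a neighborhood of $(x^*,y^*,\mu^*,\xi^*)$, to the square smooth system
\[
\Psi(x;y,\mu,\xi):=\begin{pmatrix}\nabla_y\mathcal L(x;y,\mu,\xi)\\ h(x,y)\\ g_i(x,y)_{i\in I_{x^*}(y^*)}\\ \xi_i{}_{,\,i\notin I_{x^*}(y^*)}\end{pmatrix}=0 .
\]

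Next I would check that the partial Jacobian $\mathcal J_{(y,\mu,\xi)}\Psi(x^*;y^*,\mu^*,\xi^*)$ is nonsingular. This is the standard computation: it is a block matrix whose invertibility is equivalent to the combination of LICQ (condition (b), giving full row rank of the active gradients $\mathcal J_yh$ and $\nabla_yg_i$, $i\in I_{x^*}(y^*)$) and the SOSC (condition (d), giving positive definiteness of $\nabla^2_{yy}\mathcal L$ on the critical cone, which under strict complementarity equals the kernel of the active gradients). A short linear-algebra argument — if the block matrix sends a vector $(d_y,d_\mu,d_\xi)$ to zero, then $d_y$ lies in that kernel, pairing with $\nabla^2_{yy}\mathcal L d_y$ forces $d_y=0$, and then LICQ forces $d_\mu=0$, $d_{\xi}=0$ — gives nonsingularity. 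Then the classical implicit function theorem produces $\delta_1>0$, $\varepsilon_1>0$ and a continuously differentiable map $x\mapsto (y(x),\mu(x),\xi(x))$ on $\boldsymbol B_{\delta_1}(x^*)$ with values in $\boldsymbol B_{\varepsilon_1}(y^*)\times\boldsymbol B_{\varepsilon_1}(\mu^*)\times\boldsymbol B_{\varepsilon_1}(\xi^*)$ solving $\Psi(x;\cdot)=0$, hence solving the KKT system of $(\mathrm P_x)$.

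Finally I would verify that the Jacobian uniqueness conditions \emph{persist} along this path, i.e. that (a)--(d) hold at $(y(x),\mu(x),\xi(x))$ for $x$ near $x^*$. Condition (a) is immediate by construction. Conditions (b), (c) and (d) are open conditions (linear independence of a continuously varying family of vectors, strict positivity of the continuous quantities $\xi_i(x)-g_i(x,y(x))$, and positive definiteness of the continuously varying matrix $\nabla^2_{yy}\mathcal L$ restricted to the critical cone, which by strict complementarity is locally a \emph{fixed}-dimensional subspace depending continuously on $x$); each therefore survives after possibly shrinking $\delta_1$. I expect the only genuinely delicate point to be (d): one must argue that the critical cone $\mathcal C_x(y(x))$ behaves continuously — which is exactly where strict complementarity is used, since it freezes the active index set near $I_{x^*}(y^*)$ and removes the troublesome inequality $\nabla_yf^Td_y\le0$ from the description of the cone (it becomes automatically $0$ on the active subspace), reducing $\mathcal C_x(y(x))$ to a linear subspace whose defining matrix varies continuously; then positive definiteness on a compact slice of that subspace is an open condition in $x$. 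With that, shrinking $\delta_1$ once more if necessary, all four conditions hold on $\boldsymbol B_{\delta_1}(x^*)$, completing the proof.
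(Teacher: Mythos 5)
Your argument is correct, but note that the paper does not actually prove Lemma \ref{lemma-IFT}: it only \emph{recalls} it from \cite[Theorem 2.1]{Robinson 74}. What you have written is, in effect, the standard proof of that cited result, and the essential ingredients are all in place: strict complementarity lets you replace the nonsmooth equation $g(x,y)-\Pi_{\mathbb{R}_{-}^s}(g(x,y)+\xi)=0$ locally by the smooth square system $g_i(x,y)=0$ for $i\in I_{x^*}(y^*)$ and $\xi_i=0$ for $i\notin I_{x^*}(y^*)$; the kernel computation showing that LICQ plus SOSC (on the critical cone, which strict complementarity collapses to the subspace $\ker\mathcal{J}_y h(x^*,y^*)\cap\bigcap_{i\in I_{x^*}(y^*)}\ker\nabla_y g_i(x^*,y^*)^T$) force nonsingularity of the partial Jacobian is exactly the argument the paper itself reproduces later for the matrix $K(x)$ in the proof of Proposition \ref{prop3.1}; and the persistence of (b)--(d) by openness and continuity, with the active index set frozen by strict complementarity, is handled correctly. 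Two small points to tidy up. First, in your opening paragraph you call strict complementarity ``condition (d)''; in Definition \ref{jaco} it is (c), and (d) is the SOSC (you use the labels correctly afterwards). Second, ``condition (a) is immediate by construction'' deserves one more line: the implicit function theorem only delivers a solution of the reduced smooth system $\Psi=0$, so you must still verify the sign conditions $\xi_i(x)\ge 0$ for $i\in I_{x^*}(y^*)$ and $g_i(x,y(x))\le 0$ for $i\notin I_{x^*}(y^*)$ that make $(y(x),\mu(x),\xi(x))$ a genuine KKT pair; these follow (strictly) from the same continuity argument you already invoke for (c), but they are part of (a) and not automatic from $\Psi=0$.
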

\begin{remark}\label{second IFT}
From the second-order implicit function theorem \cite[page 364]{Lang 93}, we note that if $f,g,h$ are twice continuously differentiable and thrice continuously differentiable with respect to the variable $y$ around $(x^*, y^*)$, then the mapping $(y(x), \mu(x),\xi(x))$ is twice continuously differentiable for $x \in \boldsymbol{B}_{\delta_1}(x^*)$.
\end{remark}

The following proposition reveals the differentiation of the solution mapping and the corresponding multiplier mappings of the lower level problem.
\begin{proposition}\label{prop3.1}
If the assumptions of Lemma $\ref{lemma-IFT}$ are satisfied, $(y(x),\mu(x),\xi(x))$ is defined in Lemma \ref{lemma-IFT}, then
  \begin{equation}\label{nabla-y}
  \left[
  \begin{array}{c}
    \mathcal{J}y(x) \\
     \mathcal{J}\mu(x)\\
     \mathcal{J}\xi(x)
  \end{array}
  \right]
  =-K(x)^{-1}\left[
		\begin{array}{c}
			\nabla_{yx}^2 \mathcal{L}(x;y(x),\mu(x),\xi(x))\\
            \mathcal{J}_x h(x,y(x))\\
			(I-W)\mathcal{J}_x g(x,y(x))
		\end{array}\right],
\end{equation}
where
\begin{equation}\label{equ-K}
  K(x):=\left[
		\begin{array}{ccc}
			\nabla_{yy}^2 \mathcal{L}(x;y(x),\mu(x),\xi(x))&\mathcal{J}_y h(x,y(x))^T&\mathcal{J}_y g(x,y(x))^T\\
             \mathcal{J}_y h(x,y(x))&0&0\\
			(I-W)\mathcal{J}_y g(x,y(x))&0&-W
		\end{array}\right]
\end{equation}
 and
 $$  W= \mathcal{J}\Pi_{\mathbb{R}_{-}^s}(g(x,y(x))+\xi(x)).$$
\end{proposition}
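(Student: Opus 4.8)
The plan is to differentiate the KKT system that characterizes $(y(x),\mu(x),\xi(x))$ via Lemma~\ref{lemma-IFT}, treating the complementarity condition through its projection-equation form. Recall that by Lemma~\ref{lemma-IFT} the triple $(y(x),\mu(x),\xi(x))$ satisfies, for all $x\in\boldsymbol{B}_{\delta_1}(x^*)$, the system
\[
\nabla_y\mathcal{L}(x;y(x),\mu(x),\xi(x))=0,\qquad h(x,y(x))=0,\qquad g(x,y(x))-\Pi_{\mathbb{R}^s_-}(g(x,y(x))+\xi(x))=0,
\]
which is exactly the KKT reformulation appearing in problem (FP). This is a $C^1$ identity in $x$ (the projection onto $\mathbb{R}^s_-$ is piecewise linear, hence its composition with the smooth arguments is differentiable here precisely because strict complementarity holds along the whole branch, by part (c) of the Jacobian uniqueness conditions carried over by Lemma~\ref{lemma-IFT}). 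So I would simply apply the chain rule to each of the three equations and collect terms.

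\textbf{Step 1: differentiate the stationarity equation.} Writing $\nabla_y\mathcal{L}(x;y,\mu,\xi)=\nabla_y f(x,y)+\mathcal{J}_y h(x,y)^T\mu+\mathcal{J}_y g(x,y)^T\xi$, total differentiation in $x$ produces
\[
\nabla_{yx}^2\mathcal{L}+\nabla_{yy}^2\mathcal{L}\,\mathcal{J}y(x)+\mathcal{J}_y h(x,y(x))^T\mathcal{J}\mu(x)+\mathcal{J}_y g(x,y(x))^T\mathcal{J}\xi(x)=0,
\]
all derivatives of $\mathcal{L}$ evaluated at $(x;y(x),\mu(x),\xi(x))$; this is the first block row of $K(x)$ acting on $(\mathcal{J}y,\mathcal{J}\mu,\mathcal{J}\xi)^T$ set equal to $-\nabla_{yx}^2\mathcal{L}$.

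\textbf{Step 2: differentiate the two remaining equations.} From $h(x,y(x))=0$ we get $\mathcal{J}_x h+\mathcal{J}_y h\,\mathcal{J}y(x)=0$, the second block row. For the third, let $W(x):=\mathcal{J}\Pi_{\mathbb{R}^s_-}\big(g(x,y(x))+\xi(x)\big)$ (a diagonal $0$–$1$ matrix on a neighborhood by strict complementarity, with $0$ in rows of active indices and $1$ in rows of inactive indices). Differentiating $g(x,y(x))-\Pi_{\mathbb{R}^s_-}(g(x,y(x))+\xi(x))=0$ and using the chain rule on the projection term gives
\[
(I-W)\big(\mathcal{J}_x g+\mathcal{J}_y g\,\mathcal{J}y(x)\big)-W\,\mathcal{J}\xi(x)=0,
\]
which is exactly the third block row: $(I-W)\mathcal{J}_y g\,\mathcal{J}y(x)-W\,\mathcal{J}\xi(x)=-(I-W)\mathcal{J}_x g$ (the $\mathcal{J}_x h$/$\mathcal{J}\mu$ coupling is absent in the third equation, consistent with the zero block in $K(x)$). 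Stacking Steps~1–2 yields exactly $K(x)\,[\mathcal{J}y;\mathcal{J}\mu;\mathcal{J}\xi]^T=-[\nabla_{yx}^2\mathcal{L};\mathcal{J}_x h;(I-W)\mathcal{J}_x g]^T$, so formula \eqref{nabla-y} follows once $K(x)$ is invertible.

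\textbf{The main obstacle} is showing $K(x)$ is nonsingular so that we may legitimately invert it. I would argue this pointwise: suppose $K(x)(u,v,w)^T=0$. The third block row splits by coordinate: for inactive indices $i$ (where $W_{ii}=1$) it reads $-w_i=0$; for active indices $i$ (where $W_{ii}=0$) it reads $\nabla_y g_i(x,y(x))^T u=0$. So $w$ is supported on active indices and $u$ lies in the kernel of the active-constraint Jacobian together with $\mathcal{J}_y h\,u=0$ (second block row), i.e.\ $u$ lies in the subspace $\{u:\mathcal{J}_y h\,u=0,\ \nabla_y g_i^T u=0\ \forall i\in I_x(y(x))\}$, which is contained in the critical cone $\mathcal{C}_x(y(x))$ (in fact it is its lineality space). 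Left-multiplying the first block row by $u^T$ and using $\mathcal{J}_y h\,u=0$ and $\nabla_y g_i^T u=0$ on the support of $w$ gives $u^T\nabla_{yy}^2\mathcal{L}\,u=0$; by the SOSC in part (d) of the Jacobian uniqueness conditions (which holds at $(y(x),\mu(x),\xi(x))$ for all $x\in\boldsymbol{B}_{\delta_1}(x^*)$ by Lemma~\ref{lemma-IFT}) this forces $u=0$. Then the first block row reduces to $\mathcal{J}_y h^T v+\mathcal{J}_y g^T w=0$ with $w$ supported on active indices, and LICQ (part (b)) forces $v=0$ and $w=0$. Hence $K(x)$ is invertible, completing the proof; the only genuinely delicate point is keeping careful track of which $\mathcal{J}_xg$/$\mathcal{J}_yg$ and $\mathcal{J}_xh$/$\mathcal{J}_yh$ blocks couple to which multiplier derivative, which is purely bookkeeping.
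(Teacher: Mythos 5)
Your proposal is correct and follows essentially the same route as the paper: differentiate the KKT system $F^{KKT}(x,y(x),\mu(x),\xi(x))=0$ via the implicit function theorem, identify the $(y,\mu,\xi)$-Jacobian with $K(x)$ using strict complementarity to make the projection term differentiable, and prove nonsingularity of $K(x)$ by exactly the same kernel argument (the $W$-split of the third block row, SOSC on the critical cone, then LICQ). The only differences are cosmetic: the paper establishes nonsingularity first and works with explicit index sets $\alpha,\gamma$ rather than with $W$ directly.
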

\begin{proof}
Define the index sets:
\begin{eqnarray*}
&&\alpha=\left\{i: g_i(x,y(x))=0, \xi_i(x)>0\right\},\\
&& \beta=\left\{i: g_i(x,y(x))=0, \xi_i(x)=0\right\},\\
&& \gamma=\left\{i: g_i(x,y(x))<0, \xi_i(x)=0\right\}.
\end{eqnarray*}
Under the strict complementarity condition holds at $y(x)$ for $\xi(x)$, we have that $\beta=\emptyset$ and
 $W$ can be expressed as
$$
W=\operatorname{Diag}\left(w_1, \cdots, w_{s}\right),
$$
where $w_i=0$, $ i\in \alpha$ and $ w_i=1$, $ i\in \gamma $.
%Noticing that
%$$
%K(x)=\left[\begin{array}{ccc}
%\nabla_{y y}^2 \mathcal{L}(x ; y(x), \mu(x), \xi(x)) & \mathcal{J}_y h(x, y(x))^T & \mathcal{J}_y g(x,y(x))^T \\
%\mathcal{J}_y h(x,y(x))  & 0 & 0 \\
%(I-W) \mathcal{J}_y g(x,y(x)) & 0 & -W
%\end{array}\right]
%$$
%where $  W= \mathcal{J}\Pi_{\mathbb{R}_{-}^s}(g(x,y(x))+\xi(x))$.
%Define
%$$
%\hat{h}(x,y(x))=\left(\begin{array}{c}
%                        h(x,y(x)) \\
%                        g_{\alpha}(x,y(x))
%                      \end{array}
%\right).
%$$
Then $K(x)$ can be expressed as
\begin{align*}
K(x)&=\left[\begin{array}{cccc}
\nabla_{y y}^2 \mathcal{L}(x; y(x), \mu(x), \xi(x)) & \mathcal{J}_y h(x,y(x)) ^T & \mathcal{J}_y g_\alpha(x,y(x)) ^T &  \mathcal{J}_y g_\gamma(x,y(x)) ^T \\
\mathcal{J}_y h(x,y(x))  & 0 &  0 & 0 \\
\mathcal{J}_y g_\alpha(x,y(x))  & 0 &0 & 0 \\
0 & 0 & 0   & -I_{|\gamma|}
\end{array}\right].
%&=\left[\begin{array}{ccc}
%          \nabla_{y y}^2 \mathcal{L}(x; y(x), \mu(x), \xi(x)) & \mathcal{J}_y \hat{h}(x,y(x)) ^T &  \mathcal{J}_y g_\gamma(x,y(x)) ^T \\
%\mathcal{J}_y\hat{h}(x,y(x))  & 0 & 0 \\
%0 & 0    & I_{|\gamma|}
%        \end{array}\right].
\end{align*}

We first prove that when $(y(x), \mu(x),\xi(x))$ satisfies the Jacobian uniqueness conditions for Problem $(\mathrm{P}_x)$, $K(x)$ is nonsingular.
For $\eta_1 \in \mathbb{R}^m, \eta_2 \in \mathbb{R}^{r}, \eta_3 \in \mathbb{R}^{|\alpha|}, \eta_4 \in \mathbb{R}^{|\gamma|}$, consider
$$
K(x)\left[\begin{array}{l}
\eta_1 \\
\eta_2 \\
\eta_3\\
\eta_4
\end{array}\right]=0,
$$
%For $ \eta \in \mathbb{R}^{?}$, consider the system
%$$
%K(x)\eta=0,
%$$
or equivalently
\begin{gather}
   \nabla_{y y}^2 \mathcal{L}(x; y(x), \mu(x), \xi(x)) \eta_1+\mathcal{J}_y h(x,y(x))^T \eta_2
  +\mathcal{J}_y g_\alpha(x,y(x)) ^T \eta_3+\mathcal{J}_y g_\gamma(x,y(x)) ^T\eta_4=0,\label{3-11-1}\\
 \mathcal{J}_y h(x,y(x)) \eta_1=0,\quad
\mathcal{J}_y g_\alpha(x,y(x))\eta_1=0,\quad \eta_4 =0.\label{3-11}
\end{gather}
From the first two equations in (\ref{3-11}) and the fact that
$$\nabla_y f(x,y(x))+ \mathcal{J}_yh(x,y(x))^T\mu(x)+ \mathcal{J}_yg(x,y(x))^T\xi(x)=0,$$
we derive that
\begin{align*}
  \eta_1\in \mathcal{C}_x(y(x))=\{d_y\in \mathbb{R}^m:&\mathcal{J}_y h(x, y(x))d_y=0;\mathcal{J}_y g_\alpha(x,y(x))d_y= 0 ;\nabla_y f(x,y(x))^Td_y= 0 \},
\end{align*}
 i.e., the critical cone of Problem $(P_x)$ at $y(x)$.
Multiplying  $\eta_1^T$ to both sides of (\ref{3-11-1}), we obtain
\begin{equation*}
  \eta_1^T\nabla_{y y}^2 \mathcal{L}(x; y(x), \mu(x), \xi(x)) \eta_1+\eta_1^T\mathcal{J}_y h(x,y(x))^T \eta_2
  +\eta_1^T \mathcal{J}_y g_\alpha(x,y(x)) ^T \eta_3
  =0,
\end{equation*}
by  (\ref{3-11}), we have
\begin{equation*}
  \eta_1^T\nabla_{y y}^2 \mathcal{L}(x; y(x), \mu(x), \xi(x)) \eta_1 =0,
\end{equation*}
for $\eta_1\in \mathcal{C}_x(y(x))$. Therefore we obtain $\eta_1=0$ from the SOSC at $(y(x),\mu(x),\xi(x))$ for Problem $(P_x)$.  Substituting $\eta_1=0$ and $\eta_4=0$ to (\ref{3-11-1}) and from the LICQ, we get $\eta_2=0$, $\eta_3=0$. Therefore, matrix $K(x)$ is nonsingular.

It follows from Lemma \ref{lemma-IFT} that the KKT condition holds $(y(x),\mu(x),\xi(x))$ for $(P_x)$, i.e.,
  \begin{equation}\label{GE-1}
	F^{KKT}(x,y(x),\mu(x),\xi(x)):=
	\left(
	\begin{array}{c}
		\nabla_y \mathcal{L}(x;y(x),\mu(x),\xi(x))\\
        h(x,y(x))\\
		g(x,y(x))-\Pi_{\mathbb{R}_{-}^s}(g(x,y(x))+\xi(x))
	\end{array}
	\right)=0,
\end{equation}
and $ \Pi_{\mathbb{R}_{-}^s}(g(x,y(x))+\xi(x)) $ is differentiable since $g(x,y(x))+\xi(x) \neq 0$.

It is easy to calculate that
\begin{eqnarray*}
% \nonumber % Remove numbering (before each equation)
   \mathcal{J}_{(y,\mu,\xi)}F^{KKT}(x,y(x),\mu(x),\xi(x))=K(x)
%  && \quad=\left[
%		\begin{array}{ccc}
%			\nabla_{yy}^2 \mathcal{L}(x;y(x),\mu(x),\xi(x))&\mathcal{J}_y h(x,y(x))^T&\mathcal{J}_y g(x,y(x))^T\\
%             \mathcal{J}_y h(x,y(x))&0&0\\
%			(I-W)\mathcal{J}_y g(x,y(x))&0&-W
%		\end{array}\right]
\end{eqnarray*}
and
\begin{equation*}
  \mathcal{J}_xF^{KKT}(x,y(x),\mu(x),\xi(x))=
		\left[
		\begin{array}{c}
			\nabla_{yx}^2 \mathcal{L}(x;y(x),\mu(x))\\
           \mathcal{J}_x h(x,y(x))\\
			(I-W)\mathcal{J}_x g(x,y(x))
		\end{array}\right].
\end{equation*}
%For $(\ref{GE-1})$, calculating the derivative with respect to $x$ of both sides, we have
Differentiating both sides of (\ref{GE-1}) with respect to the variable $x$ yields that
\begin{equation}%\label{eq-JF^KKT}
  \mathcal{J}_xF^{KKT}(x,y(x),\mu(x),\xi(x))+\mathcal{J}_{(y,\mu,\xi)}F^{KKT}(x,y(x),\mu(x),\xi(x))
		\left[
  \begin{array}{c}
    \mathcal{J}y(x) \\
    \mathcal{J}\mu(x) \\
     \mathcal{J}\xi(x)
  \end{array}
  \right]=0,\notag
\end{equation}
then we obtain the derivatives (\ref{nabla-y}).
\end{proof}

\section{Optimality conditions and  applications to the augmented Lagrangian method}\label{section 4}
In this section, we establish the relationships between the problems (SP) and (FP) as discussed in Section \ref{S 4.1}. Subsequently, we analyze the first-order optimality conditions and the second-order optimality conditions for the bilevel problem (BP) in Section \ref{S 4.2}. Finally, we utilize the second-order sufficient optimality conditions to ensure the convergence rate of the augmented Lagrangian method for the bilevel problem (BP) in Section \ref{S 4.3}.

\subsection{The equivalence of (SP) and (FP)}\label{S 4.1}
In this subsection, we demonstrate the local equivalence of problem (SP) and problem (FP). Additionally, we establish that under the Jacobian uniqueness conditions, the local solutions of (SP) and (FP) serve as bi-local solutions for (BP).

\begin{theorem}\label{prop-equal BP CP}
Assume that the lower level problem $(P_{x^*})$ satisfies the Jacobian uniqueness conditions at $(y^*,\mu^*,\xi^*)$. Then the following statements are equivalent:
\begin{itemize}
  \item[(i)] $(x^*,y^*)$ is a bi-local solution of (BP);
  \item [(ii)] $x^*$ is a local solution of (SP);
  \item[(iii)] $(x^*,y^*,\mu^*,\xi^*)$ is a local solution of (FP).
\end{itemize}
\end{theorem}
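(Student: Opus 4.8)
The plan is to establish the two equivalences (ii)$\Leftrightarrow$(iii) and (i)$\Leftrightarrow$(ii); in each of (i)--(iii) the feasibility requirement forces $(x^*,y^*)\in\Phi$ and $y^*\in Y(x^*)$ (the latter from the KKT conditions in Definition \ref{jaco}(a)), so throughout I only track the optimality part. By Lemma \ref{lemma-IFT} the $C^1$ mappings $(y(\cdot),\mu(\cdot),\xi(\cdot))$ are defined on $\boldsymbol{B}_{\delta_1}(x^*)$ with $(y(x^*),\mu(x^*),\xi(x^*))=(y^*,\mu^*,\xi^*)$, and in the absence of global uniqueness of the lower level solution (SP) is understood via this local solution mapping, so that ``local solution of (SP)'' means a local minimizer of $F(x,y(x))$ over $\{x: H(x,y(x))=0,\ G(x,y(x))\le 0\}$ near $x^*$.

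For (ii)$\Leftrightarrow$(iii), the point is that $K(x^*)$ is nonsingular --- this was proved within the proof of Proposition \ref{prop3.1}, since the Jacobian uniqueness conditions hold at $(y^*,\mu^*,\xi^*)$ --- and, by strict complementarity, $\Pi_{\mathbb{R}_{-}^s}$ is $C^\infty$ near $g(x^*,y^*)+\xi^*$, so $F^{KKT}$ is $C^1$ near $(x^*,y^*,\mu^*,\xi^*)$ with $\mathcal{J}_{(y,\mu,\xi)}F^{KKT}=K(\cdot)$. The classical implicit function theorem applied to $F^{KKT}(x,\cdot)=0$ then produces a neighborhood $\boldsymbol{B}_{\varepsilon}(x^*)\times\boldsymbol{B}_{\varepsilon'}(y^*)\times\boldsymbol{B}_{\varepsilon'}(\mu^*)\times\boldsymbol{B}_{\varepsilon'}(\xi^*)$ on which $F^{KKT}(x,y,\mu,\xi)=0$ holds iff $(y,\mu,\xi)=(y(x),\mu(x),\xi(x))$. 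Hence on this neighborhood the feasible set of (FP) is exactly $\{(x,y(x),\mu(x),\xi(x)):H(x,y(x))=0,\,G(x,y(x))\le 0\}$, which corresponds bijectively with the feasible set of (SP) near $x^*$ via $x\leftrightarrow(x,y(x),\mu(x),\xi(x))$, the objective of (FP) there being $F(x,y(x))$. Matching neighborhoods gives that $x^*$ is a local minimizer of (SP) iff $(x^*,y^*,\mu^*,\xi^*)$ is a local minimizer of (FP); feasibility of $(x^*,y^*,\mu^*,\xi^*)$ for (FP) is read off from the KKT conditions, using that $0\le\xi^*\perp g(x^*,y^*)\le 0$ is equivalent to $g(x^*,y^*)=\Pi_{\mathbb{R}_{-}^s}(g(x^*,y^*)+\xi^*)$.

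The crux of (i)$\Leftrightarrow$(ii) is a uniform lower-level minimality fact: there are $\bar\delta\in(0,\delta_1]$ and $\bar\varepsilon>0$ such that for every $x\in\boldsymbol{B}_{\bar\delta}(x^*)$, $y(x)$ is the unique global minimizer of $f(x,\cdot)$ over $Y(x)\cap\boldsymbol{B}_{\bar\varepsilon}(y^*)$. This rests on the quadratic growth condition $f(x^*,z)\ge f(x^*,y^*)+c\|z-y^*\|^2$ on a fixed neighborhood of $y^*$ intersected with $Y(x^*)$ (a consequence of LICQ, strict complementarity and SOSC at $y^*$), together with the stability of quadratic growth under the continuous perturbation $x\to x^*$ and the convergences $y(x)\to y^*$, $\mu(x)\to\mu^*$, $\xi(x)\to\xi^*$: these yield $f(x,z)\ge f(x,y(x))+\tfrac{c}{2}\|z-y(x)\|^2$ for $z\in Y(x)\cap\boldsymbol{B}_{\bar\varepsilon}(y^*)$ uniformly in $x\in\boldsymbol{B}_{\bar\delta}(x^*)$, whence the asserted existence and uniqueness. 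I expect this to be the main obstacle, as it is precisely where one must show the basin of $y(x)$ does not collapse as $x\to x^*$; a careful perturbation estimate in the spirit of stability results for nonlinear programs is needed here.

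Granting this, (i)$\Rightarrow$(ii): let $(\delta_0,\eta)$ witness the bi-local optimality of $(x^*,y^*)$; after shrinking $\delta_0$ we may take $\delta_0\le\bar\delta$ and, for $\delta\in(0,\delta_0]$, $\eta(\delta)\le\bar\varepsilon$ and $\eta(\delta)\ge\sup_{\|x-x^*\|\le\delta}\|y(x)-y^*\|$ (possible since $\eta(\delta)\to 0$ and $y(\cdot)$ is continuous). For any $x\in\boldsymbol{B}_{\delta_0}(x^*)$ feasible for (SP), $(x,y(x))\in\Phi\cap[\boldsymbol{B}_{\delta_0}(x^*)\times\boldsymbol{B}_{\eta(\delta_0)}(y^*)]$ and, by the uniform minimality fact, $y(x)\in\argmin_z\{f(x,z):z\in Y(x)\cap\boldsymbol{B}_{\eta(\delta_0)}(y^*)\}$; Definition \ref{def 2.1} then yields $F(x^*,y^*)\le F(x,y(x))$, so $x^*$ is a local minimizer of (SP). Conversely, (ii)$\Rightarrow$(i): if $x^*$ minimizes $F(x,y(x))$ over (SP)-feasible $x\in\boldsymbol{B}_{\delta''}(x^*)$, put $\delta_0:=\min\{\delta'',\bar\delta\}$ and pick $\eta:(0,\delta_0]\to\mathbb{R}_+$ with $\eta(\delta)\to 0$, $\eta(\delta)\le\bar\varepsilon$ and $\eta(\delta)\ge\sup_{\|x-x^*\|\le\delta}\|y(x)-y^*\|$. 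The first requirement of Definition \ref{def 2.1} holds at $x^*$ by the uniform minimality fact at $x=x^*$; and whenever $(x,y)\in\Phi\cap[\boldsymbol{B}_\delta(x^*)\times\boldsymbol{B}_{\eta(\delta)}(y^*)]$ with $y\in\argmin_z\{f(x,z):z\in Y(x)\cap\boldsymbol{B}_{\eta(\delta)}(y^*)\}$, the uniqueness part forces $y=y(x)$, so $(x,y)$ is (SP)-feasible and $F(x^*,y^*)\le F(x,y(x))=F(x,y)$. Hence $(x^*,y^*)$ is a bi-local solution, which completes the equivalences.
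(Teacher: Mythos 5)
Your proof follows the same route as the paper's: both equivalences are reduced, via Lemma \ref{lemma-IFT} and the nonsingularity of $K(x^*)$ from the proof of Proposition \ref{prop3.1}, to the facts that near $(x^*,y^*,\mu^*,\xi^*)$ the KKT system of $(P_x)$ has the unique solution $(y(x),\mu(x),\xi(x))$, and that $y(x)$ is the unique minimizer of $f(x,\cdot)$ on $Y(x)$ intersected with a ball of \emph{fixed} radius about $y^*$. The paper's own proof is in fact terser than yours --- it asserts both facts directly from Lemma \ref{lemma-IFT} and matches the feasible sets --- so your isolation of the uniform-basin property as the real content of (i)$\Leftrightarrow$(ii), and your explicit bookkeeping against Definition \ref{def 2.1}, is more careful than the source.

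One step is not legitimate as written. In (i)$\Rightarrow$(ii) the pair $(\delta_0,\eta)$ is \emph{given} by the hypothesis that $(x^*,y^*)$ is bi-local, and you may not replace $\eta$ by a larger function: the bi-local property is not monotone in $\eta$ (enlarging $\eta$ enlarges the set of competitors $(x,y)$), and ``$\eta(\delta)\to 0$ and $y(\cdot)$ continuous'' does not force $\eta(\delta)\ge\sup_{\|x-x^*\|\le\delta}\|y(x)-y^*\|$ (take $\eta(\delta)=\delta^2$ against a Lipschitz $y(\cdot)$). The repair stays within your framework: fix one $\delta^\dagger\in(0,\min\{\delta_0,\bar\delta\}]$ with $0<\eta(\delta^\dagger)\le\bar\varepsilon$, and by continuity of $y(\cdot)$ choose $\rho\in(0,\delta^\dagger]$ so that $\|y(x)-y^*\|<\eta(\delta^\dagger)$ for all $x\in\boldsymbol{B}_{\rho}(x^*)$; every (SP)-feasible $x\in\boldsymbol{B}_{\rho}(x^*)$ then satisfies, with $y=y(x)$ and at the single scale $\delta=\delta^\dagger$, all hypotheses of Definition \ref{def 2.1} (since $y(x)$ minimizes over $Y(x)\cap\boldsymbol{B}_{\bar\varepsilon}(y^*)$ and lies in the smaller ball), giving $F(x^*,y^*)\le F(x,y(x))$. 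Your (ii)$\Rightarrow$(i) direction is unaffected, since there you are free to construct $\eta$. A residual caveat, inherited from the paper rather than introduced by you: Definition \ref{def 2.1} does not exclude $\eta\equiv 0$, in which case no choice of $\delta^\dagger$ is available; one must read $\eta>0$ for the implication (i)$\Rightarrow$(ii) to hold as stated.
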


\begin{proof}
  It follows from Lemma $\ref{lemma-IFT}$ that $y(x)\in \boldsymbol{B}_{\epsilon_1}(y^*)$ is the unique local minimizer of Problem $(P_x)$ for $x\in \boldsymbol{B}_{\delta_1}(x^*)$. Then $(x^*,y^*)$ is a bi-local solution of (BP) if and only if for any $(x,y(x))\in \Phi \cap[\boldsymbol{B}_{\delta_1}(x^*)\times\boldsymbol{B}_{\epsilon_1}(y^*)]$,
$ F(x^*,y^*)\le F(x,y(x))$, which means that $x$ is a local solution of (SP). The converse is also ture. Thus the relation $(i)\Leftrightarrow (ii)$ holds.

It also follows from Lemma $\ref{lemma-IFT}$ that $(y(x),\mu(x),\xi(x))\in \boldsymbol{B}_{\varepsilon_1}(y^*)\times\boldsymbol{B}_{\varepsilon_1}(\mu^*)
\times\boldsymbol{B}_{\varepsilon_1}(\xi^*)$ is the unique KKT point of Problem $(P_x)$
%\textcolor{red}{and thus $F^{KKT}(x,y(x),\mu(x),\xi(x))=0$}
 for $x\in \boldsymbol{B}_{\delta_1}(x^*)$.
Thus the feasible set of (FP) locally turns out to be $\{(x,y(x),\mu(x),\xi(x)):(x,y(x))\in \Phi\}$ .
 %feasible point $(x,y,\mu,\xi)$ of (FP) and, we must have that $(y,\mu,\xi)=(y(x),\mu(x),\xi(x))$ and $(x,y(x))\in \Phi $.
Then $(x^*,y^*,\mu^*,\xi^*)$ is a local solution of (FP) if and only if for any $(x,y(x))\in \Phi \cap[\boldsymbol{B}_{\delta_1}(x^*)\times\boldsymbol{B}_{\epsilon_1}(y^*)]  $
such that
$ F(x^*,y^*)\le F(x,y(x))$. Thus the relation $(iii)\Leftrightarrow (ii)$ holds.
We complete the proof.
\end{proof}
\subsection{Optimality conditions}\label{S 4.2}
Constraint qualifications are imposed on the constraint region of a mathematical program to guarantee the KKT conditions hold at local optimal solutions.
Consider the following nonlinear problem:
$$(P)~~~~~~\min ~f(x)~~~{\rm s.t.}~h(x)=0,~g(x)\le0,$$
where $f:\mathbb{R}^n  \to \mathbb{R} $, $h:\mathbb{R}^n \to \mathbb{R}^{n_1} $ and $g:\mathbb{R}^{n} \to \mathbb{R}^{n_2} $. Denote the feasible region of the problem $(P)$ by $\Omega $.
Let $x^*\in \Omega$ be a point around which $f,g,h$ are continuously differentiable.
The point $x^*$ is said to satisfy the
  Mangasarian-Fromovitz constraint qualification (MFCQ)  for the problem $(P)$ if
\begin{itemize}
  \item [(a)]The set of vectors $\{\nabla h_j(x^*), j=1, \cdots, n_1\}$ is linearly independent;
  \item [(b)]There exists a vector $d \in \mathbb{R}^n$ such that
$$
\nabla h_j(x^*)^T d=0, \quad j=1, \cdots, n_1, \quad \nabla g_i(x^*)^T d<0, \quad i \in I(x^*),
$$
where $I(x^*)=\left\{i=1, \cdots, n_2: g_i(x^*)=0\right\}$.
\end{itemize}

We first show that under the Jacobian uniqueness conditions, the MFCQ for $(SP)$ is the same as it for $(FP)$.
\begin{proposition}\label{prop 4-1}
Let $\left(x^*, y^*\right)  \in \Phi$ be a point around which $f,g,h$ are  continuously differentiable and
twice continuously differentiable with respect to the variable $y$ and $F,G,H$ are continuously differentiable. Suppose that the lower level problem $(P_{x^*})$ satisfies the Jacobian uniqueness conditions at $(y^*,\mu^*,\xi^*)$.
Then the MFCQ holds at $x^*$ for (SP) if and only if the MFCQ holds at $(x^*,y^*,\mu^*,\xi^*)$ for (FP).
	\end{proposition}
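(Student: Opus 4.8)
The plan is to reduce the MFCQ for (FP) to the MFCQ for (SP) by the change of variables along the graph of $x\mapsto(y(x),\mu(x),\xi(x))$ supplied by Lemma~\ref{lemma-IFT}. Write $z=(y,\mu,\xi)$ and let $F^{KKT}(x,z)$ be as in \eqref{GE-1}. By the complementarity and strict complementarity parts of the Jacobian uniqueness conditions (Definition~\ref{jaco}(a),(c)), $g_i(x^*,y^*)+\xi_i^*\neq0$ for every $i$, so the projection term in $F^{KKT}$ is continuously differentiable near $(x^*,z^*)$ and hence $F^{KKT}$ is $C^1$ there; by Proposition~\ref{prop3.1}, $\mathcal{J}_zF^{KKT}(x^*,z^*)=K(x^*)$ is nonsingular and $\mathcal{J}z(x^*)=-K(x^*)^{-1}\mathcal{J}_xF^{KKT}(x^*,z^*)$. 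Hence, shrinking the radius if necessary, the lower-level equality constraints $F^{KKT}(x,z)=0$ of (FP) cut out exactly the graph $\{(x,z(x)):x\in\boldsymbol{B}_{\delta_1}(x^*)\}$ (this local description of the feasible set of (FP) is what was used via Lemma~\ref{lemma-IFT}), and the active constraints of (FP) at $(x^*,z^*)$ are the equalities $H(x,y)=0$ and $F^{KKT}(x,z)=0$ together with the inequalities $G_i(x,y)\le0$, $i\in I_G:=\{i:G_i(x^*,y^*)=0\}$; along $x\mapsto(x,y(x))$ the upper-level ones pull back to $H(x,y(x))=0$ and $G(x,y(x))\le0$, i.e.\ to the constraints of (SP), with the same active set $I_G$.

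I would then use throughout the chain-rule identity $\mathcal{J}_x[H(x,y(x))]|_{x^*}=\mathcal{J}_xH(x^*,y^*)+\mathcal{J}_yH(x^*,y^*)\,\mathcal{J}y(x^*)$, and likewise for each $G_i$. For the Mangasarian--Fromovitz direction condition, a vector $d=(d_x,d_z)$ obeys $\mathcal{J}_{(x,z)}F^{KKT}(x^*,z^*)\,d=0$ iff $\mathcal{J}_xF^{KKT}(x^*,z^*)\,d_x+K(x^*)\,d_z=0$, i.e.\ iff $d_z=\mathcal{J}z(x^*)\,d_x$; substituting this, the chain-rule identity gives $\mathcal{J}_{(x,z)}H(x^*,y^*)\,d=\mathcal{J}_x[H(x,y(x))]|_{x^*}\,d_x$ and $\nabla_{(x,z)}G_i(x^*,y^*)^T d=\nabla_x[G_i(x,y(x))]|_{x^*}^T d_x$. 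So $d$ is an MFCQ direction for (FP) if and only if $d_x$ is an MFCQ direction for (SP), and any MFCQ direction $d_x$ for (SP) lifts to $(d_x,\mathcal{J}z(x^*)d_x)$; this settles the equivalence of condition (b).

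For the linear-independence condition (a), suppose $\sum_j\lambda_j\nabla_{(x,z)}H_j+\sum_l\nu_l\nabla_{(x,z)}F^{KKT}_l=0$. Since $H_j$ is independent of $(\mu,\xi)$ while $\mathcal{J}_zF^{KKT}(x^*,z^*)=K(x^*)$, the $z$-block of this identity reads $v(\lambda)+K(x^*)^T\nu=0$, where $v(\lambda)\in\mathbb{R}^{m+r+s}$ has $y$-block $\mathcal{J}_yH(x^*,y^*)^T\lambda$ and vanishing $(\mu,\xi)$-block; hence $\nu=-(K(x^*)^T)^{-1}v(\lambda)$. Feeding this into the $x$-block $\mathcal{J}_xH^T\lambda+(\mathcal{J}_xF^{KKT})^T\nu=0$, using $-(\mathcal{J}_xF^{KKT})^T(K(x^*)^T)^{-1}=\mathcal{J}z(x^*)^T$, and computing $\mathcal{J}z(x^*)^Tv(\lambda)=\mathcal{J}y(x^*)^T\mathcal{J}_yH^T\lambda=(\mathcal{J}_yH\,\mathcal{J}y(x^*))^T\lambda$, one obtains $\mathcal{J}_x[H(x,y(x))]|_{x^*}^T\lambda=0$. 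Therefore, if the equality gradients of (SP) are linearly independent then $\lambda=0$, whence $\nu=0$ and the equality gradients of (FP) are linearly independent; the reverse implication follows by running the computation backwards --- given $\lambda$ with $\mathcal{J}_x[H(x,y(x))]|_{x^*}^T\lambda=0$, put $\nu=-(K(x^*)^T)^{-1}v(\lambda)$ and check directly that $(\lambda,\nu)$ is a linear dependence of the equality gradients of (FP). Combining the two parts gives the claimed equivalence of the MFCQ's.

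The difficulty here is bookkeeping rather than conceptual: one has to keep track of the block structure of $K(x^*)$ in \eqref{equ-K}, of the vanishing $(\mu,\xi)$-blocks of $\nabla H_j$ and $\nabla G_i$, and of the transpose form of the relation $\mathcal{J}z(x^*)=-K(x^*)^{-1}\mathcal{J}_xF^{KKT}$; and --- before any of this --- one must use strict complementarity to guarantee that the projection-based equality constraint of (FP) is $C^1$ near $(x^*,y^*,\mu^*,\xi^*)$, so that its gradient is defined and equals the matching row of $[\mathcal{J}_xF^{KKT},\,K(x^*)]$, which is exactly what licenses the reduction.
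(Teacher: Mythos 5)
Your proposal is correct and follows essentially the same route as the paper: both reduce (FP) to (SP) along the implicit map $x\mapsto(y(x),\mu(x),\xi(x))$, exploit the nonsingularity of $K(x^*)$ and the identity $\mathcal{J}z(x^*)=-K(x^*)^{-1}\mathcal{J}_xF^{KKT}(x^*,z^*)$, and translate the kernel/direction condition by the chain rule exactly as in the paper's step (ii). The only cosmetic difference is in the rank condition, where you argue via linear dependence of the equality gradients (the transpose formulation) while the paper right-multiplies the Jacobian $A$ by the nonsingular block matrix $B$ and reads off the block-triangular structure of $AB$; these are equivalent computations.
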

\begin{proof} It is easy to see that $(x^*,y^*,\mu^*,\xi^*)$ is a feasible point of (FP).  The MFCQ holds at $(x^*,y^*,\mu^*,\xi^*)$ for the problem (FP), if the following conditions hold:
\begin{itemize}
  \item[(a)] The matrix
  \begin{equation}\label{A}
             A:=\left[\begin{array}{cccc}
                     \mathcal{J}_xH(x^*,y^*) & \mathcal{J}_yH(x^*,y^*) & 0 & 0 \\
                    \nabla_{yx}^2 \mathcal{L}(x^*;y^*,\mu^*,\xi^*)
                    &\nabla_{yy}^2 \mathcal{L}(x^*;y^*,\mu^*,\xi^*)
                    &\mathcal{J}_y h(x^*,y^*)^T &\mathcal{J}_y g(x^*,y^*)^T \\
                     \mathcal{J}_x h(x^*,y^*) & \mathcal{J}_y h(x^*,y^*)&0&0 \\
                     (I-W^*)\mathcal{J}_x g(x^*,y^*) & (I-W^*)\mathcal{J}_y g(x^*,y^*)&0& -W^*
                   \end{array}
  \right]
              \end{equation}
  has full row rank, where $ W^*= \mathcal{J}\Pi_{\mathbb{R}_{-}^s}(g(x^*,y^*)+\xi^*).$
  \item[(b)]There is $d= (d_x,d_y,d_{\mu},d_{\xi})\in ker A $ such that %\mathbb{R}^n\times\mathbb{R}^m\times\mathbb{R}^r\times\mathbb{R}^s
      \begin{eqnarray}
    &&  \nabla_xG_i(x^*,y^*)^Td_x+  \nabla_yG_i(x^*,y^*)^T d_y  <0,\quad i\in I_G(x^*,y^*), \label{mfg}
  \end{eqnarray}
  where $I_G(x^*,y^*):=\{i=1,\cdots,q:G_i(x^*,y^*)=0\}$.
\end{itemize}

 We prove the equivalence by the following two steps.

(i) Let
$$B:=\left[\begin{array}{cccc}
                I_n & 0 & 0 & 0 \\
                 \mathcal{J}y (x^*) & I_m & 0 & 0 \\
                 \mathcal{J}\mu(x^*) & 0 & I_r & 0 \\
                  \mathcal{J}\xi(x^*) &  0 & 0 & I_s
              \end{array}\right].$$
Since the matrix $B$ is nonsingular, then $rank(A)=rank(AB)$.
                By (\ref{nabla-y}), we have that
 \begin{align*}
   &AB=\left[\begin{array}{cc}
                \mathcal{J}_xH(x^*,y^*)+\mathcal{J}_yH(x^*,y^*)\mathcal{J}y(x^*) & V  \\
                 0 &K(x^*)
              \end{array}\right],
%             \\
%   &=\left[\begin{array}{cccc}
%                \mathcal{J}_xH(x^*,y^*)+\mathcal{J}_yH(x^*,y^*)\mathcal{J}y(x^*) & \mathcal{J}_yH(x^*,y^*) & 0 & 0 \\
%                 0 &\nabla_{yy}^2 \mathcal{L}(x^*;y^*,\mu^*,\xi^*)  &\mathcal{J}_y h(x^*,y^*)^T &\mathcal{J}_y g(x^*,y^*)^T \\
%                 0 &\mathcal{J}_y h(x^*,y^*)&0&0 \\
%                 0 &(I-W^*)\mathcal{J}_y g(x^*,y^*)&0&-W^*
%              \end{array}\right],
  \end{align*}
  where $V:=[\mathcal{J}_yH(x^*,y^*), 0, 0]$ and $K(x)$ is defined by (\ref{equ-K}).

%Noticing that $$K(x^*)= \left[\begin{array}{ccc}
%                    \nabla_{yy}^2 \mathcal{L}(x^*;y^*,\mu^*,\xi^*)
%                    &\mathcal{J}_y h(x^*,y^*)^T &\mathcal{J}_y g(x^*,y^*)^T \\
%                      \mathcal{J}_y h(x^*,y^*)&0&0 \\
%                      (I-W^*)\mathcal{J}_y g(x^*,y^*)&0&-W^*
%                   \end{array}
%  \right]$$
Since $K(x^*)$  is nonsingular from the proof of Proposition \ref{prop3.1},
  it follows that $A$ has full row rank if and only if $\mathcal{J}_xH(x^*,y^*)+\mathcal{J}_yH(x^*,y^*)\mathcal{J}y(x^*) $ has full row rank.

(ii) Suppose there exists $d= (d_x,d_y,d_{\mu},d_{\xi})\in ker A$, i.e.,
  \begin{eqnarray}
        &\mathcal{J}_xH(x^*,y^*)d_x+\mathcal{J}_yH(x^*,y^*)d_y=0,\label{4-2-1}\\
    & \left[ \begin{array}{c}
           \nabla_{yx}^2 \mathcal{L}(x^*;y^*,\mu^*,\xi^*)
                     \\
          \mathcal{J}_x h(x^*,y^*)  \\
       (I-W^*)\mathcal{J}_y g(x^*,y^*)
         \end{array} \right]d_x +K(x^*)\left[\begin{array}{c}
       d_y\\
       d_{\mu}\\
       d_{\xi}
              \end{array}
         \right]=0.\label{4-2}
%       &\nabla_xG_i(x^*,y^*)^Td_x+\nabla_yG_i(x^*,y^*)^T\mathcal{J}y(x^*)d_y<0,\quad i\in I_{x^*}(y^*).\notag
  \end{eqnarray}
%  \begin{eqnarray}
% &&\mathcal{J}_xH(x^*,y^*)d_x+\mathcal{J}_yH(x^*,y^*)d_y=0,\label{4-1}\\
%    && \left[ \begin{array}{c}
%           \nabla_{yx}^2 \mathcal{L}(x^*;y^*,\mu^*,\xi^*)
%                     \\
%          \mathcal{J}_x h(x^*,y^*)  \\
%       (I-W)\mathcal{J}_y g(x^*,y^*)
%         \end{array} \right]d_x +K(x^*)\left[\begin{array}{c}
%       d_y\\
%       d_{\mu}\\
%       d_{\xi}
%              \end{array}
%         \right]=0, \label{4-2}\\
%         &&\nabla_xG_i(x^*,y^*)^Td_x+\nabla_yG_i(x^*,y^*)^T\mathcal{J}y(x^*)d_y<0,\quad i\in I_{x^*}(y^*).
% \end{eqnarray}
From Proposition \ref{prop3.1}, the condition $ (\ref{4-2})$ becomes
 \begin{eqnarray}
&&\left[\begin{array}{c}
       d_y\\
       d_{\mu}\\
       d_{\xi}
              \end{array}
         \right]=-K(x^*)^{-1}\left[ \begin{array}{c}
           \nabla_{yx}^2 \mathcal{L}(x^*;y^*,\mu^*,\xi^*)
                     \\
          \mathcal{J}_x h(x^*,y^*)  \\
       (I-W)\mathcal{J}_y g(x^*,y^*)
         \end{array} \right]d_x=\left[ \begin{array}{c}
          \mathcal{J}y (x^*)\\
          \mathcal{J}\mu(x^*)  \\
       \mathcal{J}\xi(x^*)
         \end{array} \right]d_x.\label{4-3}
 \end{eqnarray}
% where the last equation holds by $(\ref{nabla-y})$.
Replacing $d_y$ by $\mathcal{J}y(x^*) d_x$ into the conditions $(\ref{4-2-1})$ and (\ref{mfg}), we derive that
\begin{eqnarray}
  % % Remove numbering (before each equation)
&&    \left(\mathcal{J}_xH(x^*,y^*)+\mathcal{J}_yH(x^*,y^*)\mathcal{J}y(x^*)\right)d_x=0, \label{MFCQ-b}\\
&&  \left(\nabla_xG_i(x^*,y^*)^T+\nabla_yG_i(x^*,y^*)^T\mathcal{J}y(x^*)\right)d_x<0,\quad i\in I_{G}(x^*,y^*).\label{MFCQ-c}
  \end{eqnarray}
 It shows that if the MFCQ holds at $(x^*,y^*,\mu^*,\xi^*)$ for the problem (FP), then $\mathcal{J}_xH(x^*,y^*)+\mathcal{J}_yH(x^*,y^*)\mathcal{J}y(x^*) $ has full row rank and (\ref{MFCQ-b})-(\ref{MFCQ-c}) hold,
 which  guarantee that the MFCQ holds at $x^*$ for the problem (SP).

 Conversely, if there exists $d_x\in \mathbb{R}^n$ such that (\ref{MFCQ-b})-(\ref{MFCQ-c}) hold, by setting $(d_y, d_{\mu}, d_{\xi})$ as in (\ref{4-3}), we derive that (\ref{mfg})-(\ref{4-2}) hold. Then the MFCQ holds at $(x^*,y^*,\mu^*,\xi^*)$ for (FP).
We complete the proof.
\end{proof}

The Lagrangian functions of (SP) and (FP) are defined by
\begin{eqnarray}
   &&L^{SP}(x;\lambda_H,\lambda_G)=F(x,y(x))+\lambda_H^TH(x,y(x))+\lambda_G^TG(x,y(x)),\nonumber\\
   &&L^{FP}(x,y,\mu,\xi;\lambda)=F(x,y)+\lambda_H^TH(x,y)+\lambda_G^TG(x,y)
    +\lambda_{\mathcal{L}}^T\nabla_{y} \mathcal{L}(x;y,\mu,\xi)\nonumber\\
    &&~~~~~~~~~~~~~~~~~~~~~~~~~~~~~~+\lambda_h^Th(x,y)+\lambda_g^T(g(x,y)-\Pi_{\mathbb{R}_{-}^s}(g(x,y)+\xi)),\nonumber
 \end{eqnarray}
where $\lambda=(\lambda_H,\lambda_G,\lambda_{\mathcal{L}},\lambda_h,\lambda_g)$, respectively.
The following theorem gives the first-order optimality conditions of a bi-local solution of the problem (BP), which are equivalent to the first-order optimality conditions of the problems (SP) and (FP) at a local solution.
For clarity of the notation, we define that $$ L(x,y;\lambda_H,\lambda_G)=F(x,y)+\lambda_H^TH(x,y)+\lambda_G^TG(x,y).$$
Simple calculation yields that the first-order derivative of $L^{SP}(x;\lambda_H,\lambda_G)$ at the point $(x^*;\lambda_H^*,\lambda_G^*)$ with respect to the variable $x$ :$$\nabla_{x} L^{SP}(x^*;\lambda^*_H,\lambda^*_G)= \nabla_{x}L(x^*,y(x^*);\lambda_H^*,\lambda_G^*)+\mathcal{J}y(x^*)^T\nabla_{y}L(x^*,y(x^*);\lambda_H^*,\lambda_G^*)
  .$$

\begin{theorem}[First-order Necessary Optimality Conditions]\label{thm 4-1}
Let $\left(x^*, y^*\right) \in \mathbb{R}^n \times \mathbb{R}^m$ be a point around which $f,g,h$ are continuously differentiable and twice continuously differentiable with respect to the variable $y$ and $F,G,H$ are continuously differentiable.
 Suppose that the lower level problem $(P_{x^*})$ satisfies the Jacobian uniqueness conditions at $(y^*,\mu^*,\xi^*)$ and $(x^*,y^*)$ is a bi-local solution of (BP). Then the following conditions hold:
\begin{itemize}
  \item[(i)]If the MFCQ holds at $x^*$ for (SP), then there exists $(\lambda_H^*,\lambda_G^*)\in \mathbb{R}^{p}\times\mathbb{R}^{q}$ such that
      \begin{align}%\label{thm 4-1-i}
        & \nabla_{x} L^{SP}(x^*;\lambda^*_H,\lambda^*_G)=0,\notag\\
         & H(x^*,y^*)= 0,\label{4-5}\\
        &0\le \lambda_G^*\bot G(x^*,y^*)\le 0.\notag
      \end{align}
      The set of all $(\lambda_H^*,\lambda_G^*)$ satisfying (\ref{4-5}), denoted by $\Lambda(x^*)$, is a nonempty compact convex set.
  \item[(ii)]If the MFCQ holds at $(x^*,y^*,\mu^*,\xi^*)$ for (FP), then there exists $\lambda^*=(\lambda_H^*,\lambda_G^*,\lambda_{\mathcal{L}}^*,\lambda_h^*,\lambda_g^*)
  \in\mathbb{R}^{p}\times\mathbb{R}^{q}\times\mathbb{R}^{m}\
  \times\mathbb{R}^{r}\times\mathbb{R}^{s}$
   such that
      \begin{eqnarray}%\label{thm 4-1-ii}
        && \nabla_{(x,y,\mu,\xi)} L^{FP}(x^*,y^*,\mu^*,\xi^*;\lambda^*)=0\nonumber \\
         && H(x^*,y^*)= 0,\nonumber\\
&&0\le \lambda_G^*\bot G(x^*,y^*)\le 0,\label{4-6}\\
&&\nabla_{y} \mathcal{L}(x^*;y^*,\mu^*,\xi^*)=0,\nonumber\\
&&h(x^*,y^*)=0,\nonumber\\
&&g(x^*,y^*)-\Pi_{\mathbb{R}_{-}^s}(g(x^*,y^*)+\xi^*)=0.\nonumber
\end{eqnarray}
  The set of all $\lambda^*$ satisfying $(\ref{4-6})$, denoted by $\Lambda(x^*,y^*,\mu^*,\xi^*)$, is a nonempty compact convex set.
\end{itemize}
Furthermore, (i) and (ii) are equivalent.
\end{theorem}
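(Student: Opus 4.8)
The proof rests on three facts already at hand: the equivalence of the three solution notions in Theorem \ref{prop-equal BP CP}; the classical first-order necessary optimality conditions for a smooth nonlinear program at a local minimizer, together with the classical result that under MFCQ the Lagrange multiplier set at a local minimizer is nonempty, compact and convex; and the derivative formula (\ref{nabla-y}) of Proposition \ref{prop3.1}, which will let us collapse the stationarity system of (FP) onto that of (SP).

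\emph{Part (i).} By Theorem \ref{prop-equal BP CP}, the bi-local solution $(x^*,y^*)$ of (BP) yields a local solution $x^*$ of (SP). By Lemma \ref{lemma-IFT}, $y(\cdot)$ is continuously differentiable near $x^*$, so the data $F(\cdot,y(\cdot)),\,H(\cdot,y(\cdot)),\,G(\cdot,y(\cdot))$ of (SP) are continuously differentiable near $x^*$. Since MFCQ holds at $x^*$, the classical KKT theorem provides $(\lambda_H^*,\lambda_G^*)$ satisfying (\ref{4-5}), and the classical multiplier-set result gives that $\Lambda(x^*)$ is nonempty, compact and convex.

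\emph{Part (ii) and the equivalence.} By item (c) of Definition \ref{jaco} one has $g_i(x^*,y^*)+\xi^*_i\neq 0$ for every $i$, hence $\Pi_{\mathbb{R}_{-}^s}$ is smooth near $g(x^*,y^*)+\xi^*$ and (FP) is a smooth nonlinear program near $(x^*,y^*,\mu^*,\xi^*)$, which is a local solution of (FP) by Theorem \ref{prop-equal BP CP}. Writing $L^{FP}=F(x,y)+\lambda_H^TH(x,y)+\lambda_G^TG(x,y)+(\lambda_{\mathcal L},\lambda_h,\lambda_g)^T F^{KKT}(x,y,\mu,\xi)$ and using $\mathcal{J}_{(y,\mu,\xi)}F^{KKT}=K(x^*)$ from Proposition \ref{prop3.1}, the condition $\nabla_{(x,y,\mu,\xi)}L^{FP}=0$ at $(x^*,y^*,\mu^*,\xi^*)$ decomposes into
$$\nabla_x L(x^*,y^*;\lambda_H,\lambda_G)+\mathcal{J}_xF^{KKT}(x^*,y^*,\mu^*,\xi^*)^T(\lambda_{\mathcal L},\lambda_h,\lambda_g)^T=0$$
together with
$$(\nabla_y L(x^*,y^*;\lambda_H,\lambda_G),0,0)^T+K(x^*)^T(\lambda_{\mathcal L},\lambda_h,\lambda_g)^T=0 .$$
Since $K(x^*)$ is nonsingular, the second equation determines $(\lambda_{\mathcal L},\lambda_h,\lambda_g)$ uniquely as an affine function of $(\lambda_H,\lambda_G)$; substituting it into the first equation and invoking (\ref{nabla-y}) in the form $K(x^*)^{-1}\mathcal{J}_xF^{KKT}(x^*,y^*,\mu^*,\xi^*)=-(\mathcal{J}y(x^*),\mathcal{J}\mu(x^*),\mathcal{J}\xi(x^*))^T$, the first equation becomes exactly $\nabla_xL^{SP}(x^*;\lambda_H,\lambda_G)=0$. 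The remaining feasibility relations in (\ref{4-6}) beyond those in (\ref{4-5}) — namely $\nabla_y\mathcal{L}(x^*;y^*,\mu^*,\xi^*)=0$, $h(x^*,y^*)=0$ and $g(x^*,y^*)-\Pi_{\mathbb{R}_{-}^s}(g(x^*,y^*)+\xi^*)=0$ — hold automatically because $(y^*,\mu^*,\xi^*)$ is a KKT pair of $(P_{x^*})$ and impose no restriction on $\lambda^*$. Hence $(\lambda_H^*,\lambda_G^*)\mapsto(\lambda_H^*,\lambda_G^*,\lambda_{\mathcal L}^*,\lambda_h^*,\lambda_g^*)$ is an affine bijection between $\Lambda(x^*)$ and $\Lambda(x^*,y^*,\mu^*,\xi^*)$. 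Combining this with Proposition \ref{prop 4-1} (MFCQ at $x^*$ for (SP) holds iff MFCQ at $(x^*,y^*,\mu^*,\xi^*)$ for (FP) holds) establishes (ii) — nonemptiness, compactness and convexity being transported along the affine bijection — and yields the asserted equivalence of (i) and (ii).

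\emph{Main obstacle.} The one step requiring care is the block elimination: one must verify that the matrices multiplying $(\lambda_{\mathcal L},\lambda_h,\lambda_g)$ in $\nabla_{(x,y,\mu,\xi)}L^{FP}$ are precisely the transposes of $\mathcal{J}_xF^{KKT}$ and $K(x^*)=\mathcal{J}_{(y,\mu,\xi)}F^{KKT}$ from Proposition \ref{prop3.1}, and then match the transpose of (\ref{nabla-y}) against them; in particular the bookkeeping with the matrix $W^*=\mathcal{J}\Pi_{\mathbb{R}_{-}^s}(g(x^*,y^*)+\xi^*)$ (diagonal and symmetric under strict complementarity) entering $K(x^*)$ and $\mathcal{J}_xF^{KKT}$ must be handled consistently. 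Everything else is a routine application of smooth optimality theory to (SP) and (FP).
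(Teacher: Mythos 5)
Your proposal is correct and follows essentially the same route as the paper: reduce via Theorem \ref{prop-equal BP CP} to local optimality of $x^*$ for (SP) and of $(x^*,y^*,\mu^*,\xi^*)$ for (FP), invoke the classical KKT/multiplier-set result under MFCQ, and prove the equivalence by the affine correspondence $(\lambda_{\mathcal L}^*,\lambda_h^*,\lambda_g^*)=-K(x^*)^{-T}[I,0,0]^T\nabla_yL(x^*,y^*;\lambda_H^*,\lambda_G^*)$, collapsing the $x$-stationarity of (FP) onto $\nabla_xL^{SP}=0$ via formula (\ref{nabla-y}). The only cosmetic difference is that the paper obtains the properties of $\Lambda(x^*,y^*,\mu^*,\xi^*)$ by applying the classical result directly to (FP), while you transport them from $\Lambda(x^*)$ through the bijection and Proposition \ref{prop 4-1}; both are valid.
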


\begin{proof}
Since $(x^*,y^*)$ is a bi-local solution of (BP), we can know that $x^*$ is a local solution of (SP) and $(x^*,y^*,\mu^*,\xi^*)$ is a local solution of (FP) from Theorem \ref{prop-equal BP CP}. If MFCQ holds at $x^*$ for (SP), then (\ref{4-5}) holds from \cite{Nocedal 99}. Similarly, if MFCQ holds at $(x^*,y^*,\mu^*,\xi^*)$ for (FP), then (\ref{4-6}) holds.

Next we need to prove that (i) and (ii) are equivalent. Assume there exists $(\lambda_H^*,\lambda_G^*)$ such that the system (\ref{4-5}) holds.
Since the Jacobian uniqueness conditions hold at $(y^*,\mu^*,\xi^*)$ for Problem $(P_{x^*})$,
we get
\begin{eqnarray*}
&&\nabla_{y} \mathcal{L}(x^*;y^*,\mu^*,\xi^*)=0,\nonumber\\
&&h(x^*,y^*)=0,\nonumber\\
&&g(x^*,y^*)-\Pi_{\mathbb{R}_{-}^s}(g(x^*,y^*)+\xi^*)=0.\nonumber
\end{eqnarray*}
Let
\begin{align}\label{lambda L-h-g x*}
\left[\begin{array}{c}
            \lambda_{\mathcal{L}}^* \\
          \lambda_h^*\\
           \lambda_g^*
          \end{array}\right]:= - K(x^*)^{-T}[I, 0, 0]^T
\nabla_{y}L(x^*,y(x^*);\lambda_H^*,\lambda_G^*),
          \end{align}
where $K(x)^{-T}$ denotes the transpose of the inverse matrix $K(x)^{-1}$.
It follows from Proposition \ref{prop3.1} that
  \begin{eqnarray*}
   && \mathcal{J}y(x^*)^T \nabla_{y}L(x^*,y(x^*);\lambda_H^*,\lambda_G^*)\\
&&  =-\left[
		\begin{array}{c}
			\nabla_{yx}^2 \mathcal{L}(x^*;y^*,\mu^*,\xi^*)\\
            \mathcal{J}_x h(x^*,y^*)\\
			(I-W^*)\mathcal{J}_x g(x^*,y^*)
		\end{array}\right]^T K(x^*)^{-T}[I, 0, 0]^T
\nabla_{y}L(x^*,y(x^*);\lambda_H^*,\lambda_G^*)\\
&&=\nabla_{xy}^2 \mathcal{L}(x^*,y^*;\lambda_H^*,\lambda_G^*) \lambda_{\mathcal{L}}^*
+\mathcal{J}_x h(x^*,y^*)^T\lambda_h^*  + \mathcal{J}_x g(x^*,y^*)^T(I-W^*)\lambda_g^*.
\end{eqnarray*}
From (\ref{4-5}), we have that
 \begin{align}\label{kktsp}
 0&=  \nabla_{x} L^{SP}(x^*;\lambda_H^*,\lambda_G^*)\notag\\
 &= \nabla_{x}L(x^*,y(x^*);\lambda_H^*,\lambda_G^*)
 +\mathcal{J}y(x^*)^T\nabla_{y}L(x^*,y(x^*);\lambda_H^*,\lambda_G^*)\notag\\
&=\nabla_{x}L^{FP}(x^*,y^*,\mu^*,\xi^*;\lambda^*).
  \end{align}
Furthermore, (\ref{lambda L-h-g x*}) means that
\begin{align*}
0= K(x^*)^T  \left[\begin{array}{c}
            \lambda_{\mathcal{L}}^* \\
            \lambda_h^*\\
            \lambda_g^*
          \end{array}
 \right]+
 \left[\begin{array}{c}
                                \nabla_{y}L(x^*,y^*;\lambda_H^*,\lambda_G^*) \\
                                 0 \\
                                 0
                               \end{array}\right],
\end{align*}
which implies that
\begin{align}
\label{nabla_yL CP}
& 0=\nabla_{y}L(x^*,y^*;\lambda_H^*,\lambda_G^*)+\nabla_{yy}^2 \mathcal{L}(x^*;y^*,\mu^*,\xi^*) \lambda_{\mathcal{L}}^* +\mathcal{J}_y h(x^*,y^*)^T\lambda_h^*+ \mathcal{J}_y g(x^*,y^*)^T(I-W^*)\lambda_g^*\notag\\
  &\enspace=\nabla_yL^{FP}(x^*,y^*,\mu^*,\xi^*;\lambda^*), \\
  \label{nabla_mu L CP}
   & 0= \mathcal{J}_y h(x^*,y^*)\lambda_{\mathcal{L}}^*=\nabla_{\mu}L^{FP}(x^*,y^*,\mu^*,\xi^*;\lambda^*),
   \\
   \label{nabla_xi L CP}
  &0=\mathcal{J}_y g(x^*,y^*)\lambda_{\mathcal{L}}^*-W^*\lambda_g^*=
  \nabla_{\xi}L^{FP}(x^*,y^*,\mu^*,\xi^*;\lambda^*).
\end{align}
The conditions (\ref{kktsp})-(\ref{nabla_xi L CP}) imply that
 $0=\nabla_{(x,y,\mu,\xi)}L^{FP}(x^*,y^*,\mu^*,\xi^*;\lambda^*)$
 and hence there exists $\lambda^*=(\lambda_H^*,\lambda_G^*,\lambda_{\mathcal{L}}^*,\lambda_h^*,\lambda_g^*)$ such that the system (\ref{4-6}) holds.

 Conversely if there exists $\lambda^*$ such that $(\ref{4-6})$ holds.  $\nabla_{(y,\mu,\xi)}L^{FP}(x^*,y^*,\mu^*,\xi^*;\lambda^*)=0$ implies that $(\lambda_{\mathcal{L}}^*,\lambda_h^*,\lambda_g^*)$ is defined in (\ref{lambda L-h-g x*}) since the matrix $K(x^*)$ is invertible. Combining this with (\ref{kktsp}), we can derive that $ \nabla_{x} L^{SP}(x^*;\lambda_H^*,\lambda_G^*)=0$. Thus the converse conclusion is true. The proof is completed.
\end{proof}

In the rest of this subsection, we study the second-order optimality conditions  of the bilevel problem.
 Let $u=(x,y,\mu,\xi)$ and $u^*=(x^*,y^*,\mu^*,\xi^*)$. Denote by $\nabla_{(x,y)}^2 L$ the second-order derivative of the function $L$ with respect to the variable $(x,y)$ for convince.
For clarity of the following proof, we give the first and second-order derivatives of a general composite function $\tilde{\theta}(x)=\theta(x,y(x))$ with respect to the variable $x$:
\begin{align*}
  \nabla \tilde{\theta}(x)=&\nabla_x \theta(x,y(x))+\mathcal{J}y(x)^T\nabla_{y}\theta(x,y(x))=
\left[ \begin{array}{cc}
         I_n & \mathcal{J}y(x)^T
         \end{array}\right]
\nabla_{(x,y)}\theta(x,y(x))
\end{align*}
and
\begin{align}\label{nabla xx theta}
  \nabla^2 \tilde{\theta}(x)=&\left[ \begin{array}{cc}
         I_n & \mathcal{J}y(x)^T
         \end{array}\right]
\nabla_{(x,y)}^2\theta(x,y(x))\left[ \begin{array}{c}
         I_n \\ \mathcal{J}y(x)
         \end{array}\right]+\sum_{i=1}^{m}\frac{\partial \theta}{\partial y_i}(x,y(x) )\nabla^2 y_i(x).
\end{align}
\begin{lemma}\label{lemma 4-1}
Let $\left(x^*, y^*\right) \in \mathbb{R}^n \times \mathbb{R}^m$ be a point around which $f,g,h$ are twice continuously differentiable and thrice continuously differentiable with respect to variable $y$ and $F,G,H$ are twice continuously differentiable. Suppose that the lower level problem $(P_{x^*})$ satisfies the Jacobian uniqueness conditions at $(y^*,\mu^*,\xi^*)$, then for $\lambda_H^*\in \mathbb{R}^p$ and $\lambda_G^*\in \mathbb{R}^q$, we have
\begin{align}
\label{nabla xx L SP x*}
 \nabla_{xx}^2 L^{SP}(x^*;\lambda_H^*,\lambda_G^*)
 = &\left[ \begin{array}{cc}
         I_n &
         \mathcal{J}y(x^*)^T
         \end{array}\right]\nabla_{(x,y)}^2 L(x^*,y^*;\lambda_H^*,\lambda_G^*)
     \left[ \begin{array}{c}
         I_n \\
         \mathcal{J}y(x^*)
         \end{array}\right]\notag\\
        & +\sum_{i=1}^{m}\frac{\partial L}{\partial y_i}(x^*,y^*;\lambda_H^*,\lambda_G^*)\nabla^2 y_i(x^*)\\
        \label{nabla xx L SP-FP x*}
        = &U(x^*)^T\nabla_{uu}^2L^{FP}(x^*,y^*,\mu^*,\xi^*;\lambda^*)U(x^*),
         \end{align}
 where $U(x):=\left[\begin{array}{cccc}
         I_n &
         \mathcal{J}y(x)^T&
         \mathcal{J}\mu(x)^T&
          \mathcal{J}\xi(x)^T
                   \end{array}\right]^T$,
 $\mathcal{J}y(x), \mathcal{J}\mu(x), \mathcal{J}\xi(x)$ are given by (\ref{nabla-y}), $\lambda^*=(\lambda_H^*,\lambda_G^*,\lambda_{\mathcal{L}}^*,\lambda_h^*,\lambda_g^*)$ and $(\lambda_{\mathcal{L}}^*,\lambda_h^*,\lambda_g^*)$ is defined by (\ref{lambda L-h-g x*}).
\end{lemma}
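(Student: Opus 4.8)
The plan is to obtain both displayed identities from the single chain-rule formula (\ref{nabla xx theta}), the crucial observation being that the ``KKT-residual'' part of $L^{FP}$ vanishes identically along the implicit manifold $x\mapsto(y(x),\mu(x),\xi(x))$ supplied by Lemma \ref{lemma-IFT}. For (\ref{nabla xx L SP x*}) I would simply note that $L^{SP}(x;\lambda_H^*,\lambda_G^*)=L(x,y(x);\lambda_H^*,\lambda_G^*)$ is a composite of the type $\tilde\theta(x)=\theta(x,y(x))$ with $\theta=L(\cdot,\cdot;\lambda_H^*,\lambda_G^*)$, which is twice continuously differentiable because $F,G,H$ are, while $y(\cdot)$ is twice continuously differentiable by Remark \ref{second IFT}; then (\ref{nabla xx L SP x*}) is nothing but (\ref{nabla xx theta}) for this $\theta$.

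For (\ref{nabla xx L SP-FP x*}) I would introduce the auxiliary function $\Psi(x):=L^{FP}(x,y(x),\mu(x),\xi(x);\lambda^*)$ on $\boldsymbol{B}_{\delta_1}(x^*)$, with $\lambda^*$ as in the statement. Expanding the definition of $L^{FP}$ gives $L^{FP}(x,y,\mu,\xi;\lambda^*)-L(x,y;\lambda_H^*,\lambda_G^*)=(\lambda_{\mathcal{L}}^*)^T\nabla_y\mathcal{L}(x;y,\mu,\xi)+(\lambda_h^*)^Th(x,y)+(\lambda_g^*)^T(g(x,y)-\Pi_{\mathbb{R}_{-}^{s}}(g(x,y)+\xi))$, and by the KKT identity (\ref{GE-1}) each of these three terms is zero whenever $(y,\mu,\xi)=(y(x),\mu(x),\xi(x))$, $x\in\boldsymbol{B}_{\delta_1}(x^*)$. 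Hence $\Psi\equiv L^{SP}(\cdot;\lambda_H^*,\lambda_G^*)$ on a neighborhood of $x^*$, so $\nabla^2\Psi(x^*)=\nabla_{xx}^2 L^{SP}(x^*;\lambda_H^*,\lambda_G^*)$, which by the previous step equals the right-hand side of (\ref{nabla xx L SP x*}).

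It then suffices to compute $\nabla^2\Psi(x^*)$ a second way. Applying (\ref{nabla xx theta}) again --- now with outer function $L^{FP}(\cdot;\lambda^*)$, which is twice continuously differentiable in $u=(x,y,\mu,\xi)$ near $u^*$ since the projection term is smooth there ($g(x^*,y^*)+\xi^*\neq0$ by the strict complementarity condition in Definition \ref{jaco}(c), as already noted after (\ref{GE-1})), and with inner map $x\mapsto(y(x),\mu(x),\xi(x))$ --- one gets
\[
\nabla^2\Psi(x^*)=U(x^*)^T\nabla_{uu}^2 L^{FP}(u^*;\lambda^*)U(x^*)+\sum_i[\nabla_{(y,\mu,\xi)}L^{FP}(u^*;\lambda^*)]_i\,\nabla^2 w_i(x^*),
\]
where $w(\cdot):=(y(\cdot),\mu(\cdot),\xi(\cdot))$ and $U(x)$ is exactly the Jacobian of $x\mapsto(x,y(x),\mu(x),\xi(x))$, i.e.\ the matrix whose blocks are $I_n,\mathcal{J}y(x),\mathcal{J}\mu(x),\mathcal{J}\xi(x)$ from (\ref{nabla-y}). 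The residual sum vanishes because $\nabla_{(y,\mu,\xi)}L^{FP}(u^*;\lambda^*)=0$; this is precisely (\ref{nabla_yL CP})--(\ref{nabla_xi L CP}), which follow directly from the defining relation (\ref{lambda L-h-g x*}) of $(\lambda_{\mathcal{L}}^*,\lambda_h^*,\lambda_g^*)$ and the nonsingularity of $K(x^*)$ established in Proposition \ref{prop3.1}. Combining the two expressions for $\nabla^2\Psi(x^*)$ yields (\ref{nabla xx L SP-FP x*}).

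The only genuinely delicate point is that the three residual terms vanish \emph{identically} on a neighborhood of $x^*$, not merely at $x^*$ itself --- this is what legitimizes equating $\nabla^2\Psi(x^*)$ with $\nabla_{xx}^2 L^{SP}(x^*;\lambda_H^*,\lambda_G^*)$ and is the conceptual heart of the argument. Everything else is routine bookkeeping with the chain rule and the second-order implicit-function theorem; in a full write-up I would also check that the smoothness hypotheses of the lemma are exactly what Remark \ref{second IFT} and the $C^2$-regularity of $L^{FP}$ in $u$ near $u^*$ require.
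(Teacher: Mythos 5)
Your proposal is correct and is in substance the same argument as the paper's: the paper obtains (\ref{nabla xx L SP-FP x*}) by differentiating the KKT identities (\ref{y j})--(\ref{xi l}) twice in $x$, weighting by the components of $(\lambda_{\mathcal{L}}^*,\lambda_h^*,\lambda_g^*)$ and adding the resulting zeros to $\nabla_{xx}^2L^{SP}$, then killing the $\nabla^2 y_i,\nabla^2\mu_i,\nabla^2\xi_i$ terms via $\nabla_{(y,\mu,\xi)}L^{FP}(u^*;\lambda^*)=0$ from (\ref{lambda L-h-g x*}) --- which is exactly what your auxiliary function $\Psi(x)=L^{FP}(x,y(x),\mu(x),\xi(x);\lambda^*)$ accomplishes in one application of the chain rule, since $\Psi\equiv L^{SP}(\cdot;\lambda_H^*,\lambda_G^*)$ precisely because those identities hold identically near $x^*$. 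Your packaging is a cleaner presentation of the same computation, and you correctly flag the two points the argument hinges on: the identical (not merely pointwise) vanishing of the residual terms, and the smoothness of the projection term guaranteed by strict complementarity.
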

\begin{proof}
Recall that $L^{SP}(x;\lambda_H,\lambda_G)=L(x,y(x);\lambda_H,\lambda_G)$, then it follows from  (\ref{nabla xx theta}) that
(\ref{nabla xx L SP x*}) holds.
Noticing that $L(x,y;\lambda_H,\lambda_G)$ is not related with the variables $\mu$ and $\xi$, then $\nabla_{xx}^2L^{SP}(x^*;\lambda^*_H,\lambda_G^*)$ can also be expressed as
\begin{align}\label{nabla xx L SP x}
  &\nabla_{xx}^2L^{SP}(x^*;\lambda^*_H,\lambda_G^*)\notag\\
=&
U(x^*)^T
\nabla_{(x,y,\mu,\xi)}^2L(x^*,y^*;\lambda_H^*,\lambda_G^*)U(x^*)
+\sum_{i=1}^{m}\frac{\partial L}{\partial y_i}(x^*,y^*;\lambda_H^*,\lambda_G^*)\nabla^2 y_i(x^*)\notag\\
         &+\sum_{i=1}^{r}\frac{\partial L}{\partial \mu_i}(x^*,y^*;\lambda_H^*,\lambda_G^*)\nabla^2 \mu_i(x^*)
         +\sum_{i=1}^{s}\frac{\partial L}{\partial \xi_i}(x^*,y^*;\lambda_H^*,\lambda_G^*)\nabla^2 \xi_i(x^*).
\end{align}

From Lemma \ref{lemma-IFT} and Remark \ref{second IFT}, $(y(x),\mu(x),\xi(x))$ is a KKT point of Problem $(P_x)$ and twice continuously differentiable, then
\begin{gather}
\label{y j}
   \frac{\partial\mathcal{L}}{\partial y_j}(x;y(x),\mu(x),\xi(x))=0,\quad j=1,\cdots,m;\\
   \label{mu k}
   h_k(x,y(x))=0,\quad k=1,\cdots, r;\\
    \label{xi l}
  \hat{g}_l(x,y(x),\mu(x),\xi(x)):= g_l(x,y(x))-\Pi_{\mathbb{R}_{-}}(g_l(x,y(x))+\xi_l(x))=0,\quad l=1,\cdots, s.
\end{gather}

Similarly to (\ref{nabla xx theta}), for every $j=1,\cdots,m$, we derive  the second derivative on both sides of (\ref{y j}) at  $x^*$ with respect to the variable $x$:
  \begin{align}\label{equ-SD-Dy-L}
 0=&U(x^*)^T
\nabla_{(x,y,\mu,\xi)}^2\left(\frac{\partial\mathcal{L}}{\partial y_j}\right)(x^*;y^*,\mu^*,\xi^*)U(x^*)+\sum_{i=1}^{m}\frac{\partial^2\mathcal{L}}{\partial y_i \partial y_j}(x^*;y^*,\mu^*,\xi^*)\nabla^2 y_i(x^*)\notag\\
         &
         +\sum_{i=1}^{r}\frac{\partial^2\mathcal{L}}{\partial \mu_i \partial y_j}(x^*;y^*,\mu^*,\xi^*)\nabla^2 \mu_i(x^*)+\sum_{i=1}^{s}\frac{\partial^2\mathcal{L}}{\partial \xi_i \partial y_j}(x^*;y^*,\mu^*,\xi^*)\nabla^2 \xi_i(x^*).
\end{align}
For every $k=1,\cdots,r$, the second derivative on both sides of (\ref{mu k}) at  $x^*$ with respect to the variable $x$ is
  \begin{align}
 \label{equ-SD-h}
       0=&U(x^*)^T
\nabla_{(x,y,\mu,\xi)}^2h_k(x^*,y^*)U(x^*)+\sum_{i=1}^{m}\frac{\partial h_k}{\partial y_i}(x^*,y^*)\nabla^2 y_i(x^*)\notag\\
         &
         +\sum_{i=1}^{r}\frac{\partial h_k}{\partial \mu_i}(x^*,y^*)\nabla^2 \mu_i(x^*)+\sum_{i=1}^{s}\frac{\partial h_k}{\partial \xi_i}(x^*,y^*)\nabla^2 \xi_i(x^*).
  \end{align}
 For every $l=1,\cdots,s$, the second derivative on both sides of (\ref{xi l}) at  $x^*$ with respect to the variable $x$ is as follows:
  \begin{align}
          \label{equ-SD-g}
           0=&U(x^*)^T
\nabla_{(x,y,\mu,\xi)}^2\hat{g}_l(x^*,y^*,\mu^*,\xi^*)U(x^*)
+\sum_{i=1}^{m}\frac{\partial \hat{g}_l}{\partial y_i}(x^*,y^*,\mu^*,\xi^*)\nabla^2 y_i(x^*)\notag\\
         &
         +\sum_{i=1}^{r}\frac{\partial \hat{g}_l}{\partial \mu_i}(x^*,y^*,\mu^*,\xi^*)\nabla^2 \mu_i(x^*)+\sum_{i=1}^{s}\frac{\partial  \hat{g}_l }{\partial \xi_i}(x^*,y^*,\mu^*,\xi^*)\nabla^2 \xi_i(x^*).
         %&(1-w_l)\left[ \begin{array}{c}
%         I_n \\
%         \mathcal{J}y(x)
%         \end{array}\right]^T\nabla_{(x,y)}^2 g_l(x,y(x))\left[ \begin{array}{c}
%         I_n \\
%         \mathcal{J}y(x)
%         \end{array}\right]+(1-w_l)\sum_{i=1}^{m}\frac{\partial g_l}{\partial y_i}(x,y(x))\nabla^2 y_i(x)\notag\\
%   &\quad-w_l\nabla^2\xi_l(x)=0;
  \end{align}
  %where $w_l=\frac{\partial \Pi_{\mathcal{R_{-}}}(z)}{\partial z}|_{z=g_l(x,y(x))+\xi_l(x)} $.

Multiplying (\ref{equ-SD-Dy-L}) by $\lambda_{\mathcal{L},j}^*$ for $j=1,\cdots,m$,  multiplying (\ref{equ-SD-h}) by $\lambda_{h,k}^*$ for $k=1,\cdots,r$ and multiplying (\ref{equ-SD-g}) by $\lambda_{g,l}^*$ for $l=1,\cdots,s$, then adding them to (\ref{nabla xx L SP x}), we get
\begin{align}\label{Lxx SP}
&\nabla_{xx}^2L^{SP}(x^*;\lambda^*_H,\lambda_G^*)\notag\\
 =&U(x^*)^T\nabla_{(x,y,\mu,\xi)}^2L^{FP}(x^*,y^*,\mu^*,\xi^*;\lambda^*)U(x^*)
+\sum_{i=1}^{m}\frac{\partial L^{FP}}{\partial y_i}(x^*,y^*,\mu^*,\xi^*;\lambda^*)\nabla^2 y_i(x^*)\notag\\
& +\sum_{i=1}^{r}\frac{\partial L^{FP}}{\partial \mu_i}(x^*,y^*,\mu^*,\xi^*;\lambda^*)\nabla^2 \mu_i(x^*)
+\sum_{i=1}^{s}\frac{\partial L^{FP}}{\partial \xi_i}(x^*,y^*,\mu^*,\xi^*;\lambda^*)\nabla^2 \xi_i(x^*),
\end{align}
 where
 \begin{align*}
    L^{FP}(x,y,\mu,\xi;\lambda)=&L(x,y;\lambda_H,\lambda_G)
    +\sum_{j=1}^{m}\lambda_{\mathcal{L},j}\frac{\partial\mathcal{L}}{\partial y_j}(x;y,\mu,\xi)
    +\sum_{k=1}^{r}\lambda_{h,k}h_k(x,y)+\sum_{l=1}^{s}\lambda_{g,l}\hat{g}_l(x,y,\mu,\xi).
    %=\left(g(x,y)-\Pi_{\mathbb{R}_{-}^s}(g(x,y)+\xi)\right)
 \end{align*}

   %\begin{align}\label{Lxx SP}
%    & \nabla^2_{xx} L^{SP}(x^*;\lambda_H^*,\lambda_G^*)\notag\\
%    =&\textcolor{red}{  \left[ \begin{array}{c}
% I_n \\
% \mathcal{J}y(x^*)\\
% \mathcal{J}\mu(x^*)\\
% \mathcal{J}\xi(x^*)
%   \end{array}\right]^T
%   \left[\begin{array}{cc}
%           \Gamma_{11} &  \Gamma_{12} \\
%            \Gamma_{12}^T &  \Gamma_{22}
%         \end{array}\right]
%     \left[ \begin{array}{c}
%         I_n \\
%         \mathcal{J}y(x^*)\\
% \mathcal{J}\mu(x^*)\\
% \mathcal{J}\xi(x^*)
%         \end{array}\right] ?}
%         +\sum_{i=1}^{m}\left(\frac{\partial L}{\partial y_i}(x^*,y^*;\lambda_H^*,\lambda_G^*)
%         \right.\notag\\
%         &+\sum_{j=1}^{m}\lambda_{\mathcal{L},j}^*\frac{\partial^2 \mathcal{L}}{\partial y_i\partial y_j}(x^*;y^*,\mu^*,\xi^*)
%\left.+\sum_{k=1}^{r}\lambda_{h,k}^*\frac{\partial h_k}{\partial y_i}(x^*,y^*)+\sum_{l=1}^{s}(1-w_l^*)\lambda_{g,l}^*\frac{\partial g_l}{\partial y_i}(x^*,y^*)\right)\nabla^2 y_i(x^*)\notag\\
%         &+\sum_{i=1}^{r}\left( \sum_{j=1}^{m}\lambda_{\mathcal{L},j}^*\frac{\partial h_i}{\partial y_j}(x^*,y^*) \right)\nabla^2 \mu_i(x^*)+\sum_{i=1}^{s}\left( \sum_{j=1}^{m}\lambda_{\mathcal{L},j}^*\frac{\partial g_i}{\partial y_j}(x^*,y^*)\right)\nabla^2 \xi_i(x^*)\notag\\
%         &-\sum_{l=1}^{s}w_l^*\lambda_{g,l}^*\nabla^2 \xi_l(x^*),
%   \end{align}

Since $(\lambda_{\mathcal{L}}^*,\lambda_h^*,\lambda_g^*)$ is defined by (\ref{lambda L-h-g x*}),
it follows from the proof of Theorem \ref{thm 4-1} that
\begin{align*}
  \nabla_{(y,\mu,\xi)}L^{FP}(x^*,y^*,\mu^*,\xi^*;\lambda^*)=0.
\end{align*}
% i.e.,
%\begin{align*}
%  \frac{\partial L^{FP}}{\partial y_i}(x^*,y^*,\mu^*,\xi^*;\lambda^*)&=0,\quad i=1,\cdots,m;\\
%   \frac{\partial L^{FP}}{\partial \mu_i}(x^*,y^*,\mu^*,\xi^*;\lambda^*)&=0,\quad i=1,\cdots, r;\\
%   \frac{\partial L^{FP}}{\partial \xi_i}(x^*,y^*,\mu^*,\xi^*;\lambda^*)&=0,\quad i=1,\cdots, s.
%    \end{align*}
   %\begin{align*}
%  0 =&\frac{\partial}{\partial y_i} L^{FP}(x^*,y^*,\mu^*,\xi^*;\lambda^*)=\frac{\partial L}{\partial y_i}(x^*,y^*;\lambda_H^*,\lambda_G^*)+\sum_{j=1}^{m}\lambda_{\mathcal{L},j}^*\frac{\partial^2 \mathcal{L}}{\partial y_i\partial y_j}(x^*;y^*,\mu^*,\xi^*)\\
%&~~~~~~~~~~~~~~~~~~~~~~~~~~~~~~~~~~~~~~~~~~~~~+\sum_{k=1}^{r}\lambda_{h,k}^*\frac{\partial h_k}{\partial y_i}(x^*,y^*)+\sum_{l=1}^{s}(1-w_l^*)\lambda_{g,l}^*\frac{\partial g_l}{\partial y_i}(x^*,y^*),\\
%   0=&\frac{\partial}{\partial \mu_i}L^{FP}(x^*,y^*,\mu^*,\xi^*;\lambda^*)
%   =\sum_{j=1}^{m}\lambda_{\mathcal{L},j}^*\frac{\partial h_i}{\partial y_j}(x^*,y^*),\\
%   0=&\frac{\partial}{\partial \xi_i}L^{FP}(x^*,y^*,\mu^*,\xi^*;\lambda^*)
%   =  \sum_{j=1}^{m}\lambda_{\mathcal{L},j}^*\frac{\partial g_i}{\partial y_j}(x^*,y^*)-w_i^*\lambda_{g,i}^*,
%    \end{align*}
 % where $w_l^*=\frac{\partial \Pi_{\mathcal{R_{-}}}(z)}{\partial z}|_{z=g_l(x^*,y^*)+\xi_l^*} $.
%   then combing (\ref{Lxx SP}) and (\ref{array gamma}), we derive (\ref{nabla xx L}), i.e., (ii) holds.
Then substituting this into
 (\ref{Lxx SP}), we can derive that (\ref{nabla xx L SP-FP x*}) holds.
   The proof is complete.
\end{proof}

Noticing that the second derivative of $L^{FP}(u,\lambda^*)$ at point $u^*$ with respect to $u$ is specificly represented as
\begin{eqnarray*}\label{array gamma}
\nabla_{uu}^2L^{FP}(x^*,y^*,\mu^*,\xi^*;\lambda^*)=
 \left[\begin{array}{cc}
           \Gamma_{11} &  \Gamma_{12} \\
            \Gamma_{12}^T &  \Gamma_{22}
         \end{array}\right],
 \end{eqnarray*}
where
  \begin{align*}%\label{array FP}
    &\Gamma_{11} =\nabla_{(x,y)}^2L^{FP}(x^*,y^*,\mu^*,\xi^*;\lambda^*); \\
   &\Gamma_{12}=\left[\begin{array}{cccccc}
                                  \nabla_{xy}^2h_1(x^*,y^*)\lambda_{\mathcal{L}}^*  & \cdots & \nabla_{xy}^2h_r(x^*,y^*)\lambda_{\mathcal{L}}^*
                                  &\nabla_{xy}^2g_1(x^*,y^*)\lambda_{\mathcal{L}}^*  & \cdots & \nabla_{xy}^2g_s(x^*,y^*)\lambda_{\mathcal{L}}^*
                                  \\
                                  \nabla_{yy}^2h_1(x^*,y^*)\lambda_{\mathcal{L}}^*  & \cdots & \nabla_{yy}^2h_r(x^*,y^*)\lambda_{\mathcal{L}}^*
                                  &\nabla_{yy}^2g_1(x^*,y^*)\lambda_{\mathcal{L}}^*  & \cdots & \nabla_{yy}^2g_s(x^*,y^*)\lambda_{\mathcal{L}}^*
                                 \end{array}\right];\notag\\
  &\Gamma_{22}=0,\notag
 \end{align*}
with
\begin{align}\label{}
 L^{FP}(x,y,\mu,\xi;\lambda)=&F(x,y)+\lambda_H^TH(x,y)+\lambda_G^TG(x,y)+\lambda_{\mathcal{L}}^T\left(\nabla_{y}f(x,y)+\mathcal{J}_y h(x,y)^T\mu+\mathcal{J}_y g(x,y)^T\xi\right)\notag\\
 &+\lambda_h^Th(x,y)+\lambda_g^T\left(g(x,y)-\Pi_{\mathbb{R}_{-}^s}(g(x,y)+\xi)\right).\notag
\end{align}
Recall that $\nabla_{(x,y)}^2 L$ denotes the second-order derivative of the function $L$ with respect to the variable $(x,y)$.

We are now ready to study the second-order optimality conditions for the bilevel problems.
The critical cone of (SP) at $x^*$ is defined by
\begin{align}\label{critical cone-SP}
 \mathcal{C}(x^*) =   \{d_x\in \mathbb{R}^n:&\left(\mathcal{J}_xH(x^*,y^*)+\mathcal{J}_yH(x^*,y^*)\mathcal{J}y(x^*)\right)d_x=0;\notag \\
   & \left(\nabla_xG_i(x^*,y^*)^T+\nabla_yG_i(x^*,y^*)^T\mathcal{J}y(x^*)\right)d_x\le0,  i\in I_{G}(x^*,y^*);\notag\\
   &\left(\nabla_xF(x^*,y^*)^T+\nabla_yF(x^*,y^*)^T\mathcal{J}y(x^*)\right)d_x\le0\}
\end{align}
and the critical cone of (FP) at $(x^*,y^*,\mu^*,\xi^*)$ is defined  by
\begin{align}\label{critical cone-CP}
\mathcal{C}(x^*,y^*,\mu^*,\xi^*)=\{d\in \mathbb{R}^{n+m+r+s}:
%&\mathcal{J}_xH(x^*,y^*)d_x+\mathcal{J}_yH(x^*,y^*)d_y=0;\quad Ad=0;\notag \\
   & \nabla_xG_i(x^*,y^*)^Td_x+\nabla_yG_i(x^*,y^*)^Td_y\le0,  i\in I_{G}(x^*,y^*);\notag\\
   &Ad=0;\quad
\nabla_xF(x^*,y^*)^Td_x+\nabla_yF(x^*,y^*)^Td_y\le0  \},
\end{align}
where $A$ is defined by (\ref{A}).

\begin{theorem}[Second-order Necessary Optimality Conditions]\label{thm 4-2}
Let $\left(x^*, y^*\right) \in \mathbb{R}^n \times \mathbb{R}^m$ be a point around which $f,g,h$ are twice continuously differentiable and thrice continuously differentiable with respect to variable $y$ and $F,G,H$ are twice continuously differentiable.
 Suppose that the lower level problem $(P_{x^*})$ satisfies the Jacobian uniqueness conditions at $(y^*,\mu^*,\xi^*)$ and $(x^*,y^*)$ is a bi-local solution of (BP). Then the following conditions hold:
\begin{itemize}
  \item[(i)]If the MFCQ holds at $x^*$ for (SP), then $\Lambda(x^*)$ is a nonempty compact convex set and for every $d_x\in \mathcal{C}(x^*)$, %(where $\mathcal{C}(x^*) $ is defined by $(\ref{critical cone-SP}))$ ,
\begin{equation*}%\label{SP-NOC}
 \max_{(\lambda_H^*,\lambda_G^*)\in \Lambda(x^*)}\left\{d_x^T \nabla_{xx}^2L^{SP}(x^*,\lambda_H^*,\lambda_G^*)d_x\right\}\ge0.
 \end{equation*}
% where $\nabla_{xx}^2L^{SP}(x^*,\lambda_H^*,\lambda_G^*)$ is from (\ref{nabla xx L SP x*}).
  \item[(ii)]If the MFCQ holds at $(x^*,y^*,\mu^*,\xi^*)$ for (FP), then $\Lambda(x^*,y^*,\mu^*,\xi^*)$ is a nonempty compact convex set and for every $d\in \mathcal{C}(x^*,y^*,\mu^*,\xi^*)$,% (where$\mathcal{C}(x^*,y^*,\mu^*,\xi^*)$ is defined by (\ref{critical cone-CP})),
  \begin{equation*}%\label{CP-NOC}
 \max_{\lambda^*\in \Lambda(x^*,y^*,\mu^*,\xi^*)}\left\{d^T \nabla_{uu}^2L^{FP}(u^*,\lambda^*)d\right\}\ge0.
 \end{equation*}
% where $\nabla_{uu}^2L^{FP}(u^*,\lambda^*)$ is from (\ref{array gamma}) and (\ref{array FP}) .
\end{itemize}
Furthermore, (i) and (ii) are equivalent.
\end{theorem}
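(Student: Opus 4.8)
The plan is to reduce the statement to the classical second-order necessary optimality conditions for the ordinary nonlinear programs (SP) and (FP), and then to transport the resulting inequalities between the two formulations.

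First I would invoke Theorem~\ref{prop-equal BP CP}: since the Jacobian uniqueness conditions hold at $(y^*,\mu^*,\xi^*)$ and $(x^*,y^*)$ is a bi-local solution of (BP), the point $x^*$ is a local solution of (SP) and $u^*=(x^*,y^*,\mu^*,\xi^*)$ is a local solution of (FP). I would also note that the cones $\mathcal{C}(x^*)$ in (\ref{critical cone-SP}) and $\mathcal{C}(x^*,y^*,\mu^*,\xi^*)$ in (\ref{critical cone-CP}) are exactly the usual critical cones of (SP) at $x^*$ and of (FP) at $u^*$: for (SP) this follows from the chain-rule expression of $\nabla_x L^{SP}$ recorded just before Theorem~\ref{thm 4-1}, and for (FP) it is immediate once one observes that the matrix $A$ in (\ref{A}) is precisely the Jacobian with respect to $u$ of the equality-constraint mapping of (FP). Then (i) and (ii) follow, respectively, by applying to (SP) at $x^*$ and to (FP) at $u^*$ the classical second-order necessary optimality condition for a local minimizer of a nonlinear program under the MFCQ, in its ``maximum over the multiplier set'' form; the nonemptiness, compactness and convexity of $\Lambda(x^*)$ and of $\Lambda(x^*,y^*,\mu^*,\xi^*)$ were already established in Theorem~\ref{thm 4-1}.

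For the equivalence of (i) and (ii) I would set up three correspondences. (1) Proposition~\ref{prop 4-1} shows the MFCQ holds at $x^*$ for (SP) if and only if it holds at $u^*$ for (FP), so the hypotheses of (i) and (ii) coincide. (2) The proof of Theorem~\ref{thm 4-1} shows that the assignment $(\lambda_H^*,\lambda_G^*)\mapsto(\lambda_H^*,\lambda_G^*,\lambda_{\mathcal{L}}^*,\lambda_h^*,\lambda_g^*)$ defined through (\ref{lambda L-h-g x*}) is a bijection from $\Lambda(x^*)$ onto $\Lambda(x^*,y^*,\mu^*,\xi^*)$ which leaves the $(\lambda_H,\lambda_G)$ block unchanged, injectivity and surjectivity both coming from the nonsingularity of $K(x^*)$. (3) The linear map $d_x\mapsto U(x^*)d_x$, with $U(x^*)$ as in Lemma~\ref{lemma 4-1}, is a bijection from $\mathcal{C}(x^*)$ onto $\mathcal{C}(x^*,y^*,\mu^*,\xi^*)$: given $d=(d_x,d_y,d_\mu,d_\xi)$ with $Ad=0$, the block rows of $A$ associated with $K(x^*)$ constitute exactly equation (\ref{4-2}), so by Proposition~\ref{prop3.1} one must have $d_y=\mathcal{J}y(x^*)d_x$, $d_\mu=\mathcal{J}\mu(x^*)d_x$ and $d_\xi=\mathcal{J}\xi(x^*)d_x$, i.e. $d=U(x^*)d_x$; substituting these into the remaining block row of $Ad=0$ and into the two inequalities defining $\mathcal{C}(x^*,y^*,\mu^*,\xi^*)$ reproduces precisely the defining relations of $\mathcal{C}(x^*)$, and conversely $U(x^*)d_x$ satisfies all the constraints defining $\mathcal{C}(x^*,y^*,\mu^*,\xi^*)$ whenever $d_x\in\mathcal{C}(x^*)$. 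This is essentially step (ii) of the proof of Proposition~\ref{prop 4-1}.

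Finally I would combine the three correspondences with the Hessian identity (\ref{nabla xx L SP-FP x*}) of Lemma~\ref{lemma 4-1}: for $d_x\in\mathcal{C}(x^*)$, $d=U(x^*)d_x$ and a corresponding pair of multipliers, $\nabla_{xx}^2 L^{SP}(x^*;\lambda_H^*,\lambda_G^*)=U(x^*)^T\nabla_{uu}^2 L^{FP}(u^*;\lambda^*)U(x^*)$ yields $d_x^T\nabla_{xx}^2 L^{SP}(x^*;\lambda_H^*,\lambda_G^*)d_x=d^T\nabla_{uu}^2 L^{FP}(u^*;\lambda^*)d$; hence the maximum over $\Lambda(x^*)$ on the left equals the maximum over $\Lambda(x^*,y^*,\mu^*,\xi^*)$ on the right, and since $d_x\mapsto U(x^*)d_x$ is a bijection between the two critical cones, the inequality in (i) holds for all $d_x\in\mathcal{C}(x^*)$ precisely when the inequality in (ii) holds for all $d\in\mathcal{C}(x^*,y^*,\mu^*,\xi^*)$. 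I expect the main obstacle to be the careful bookkeeping in correspondence (3) --- verifying that $Ad=0$ genuinely determines $d_y,d_\mu,d_\xi$ from $d_x$ (which again relies on the nonsingularity of $K(x^*)$ established in Proposition~\ref{prop3.1}) and that all inequality constraints transform consistently; the remainder is just a combination of Theorem~\ref{thm 4-1}, Proposition~\ref{prop 4-1}, Lemma~\ref{lemma 4-1} and a standard nonlinear-programming fact.
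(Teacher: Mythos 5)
Your proposal is correct and follows essentially the same route as the paper: reduce to the classical second-order necessary conditions for (SP) and (FP) via Theorem \ref{prop-equal BP CP}, identify the critical cones through the bijection $d_x\mapsto U(x^*)d_x$ (using the nonsingularity of $K(x^*)$ from Proposition \ref{prop3.1}), match the multiplier sets via (\ref{lambda L-h-g x*}) as in Theorem \ref{thm 4-1}, and transfer the quadratic forms with the Hessian identity of Lemma \ref{lemma 4-1}. Your explicit remark that Proposition \ref{prop 4-1} makes the two MFCQ hypotheses coincide is a small point the paper leaves implicit, but otherwise the arguments are the same.
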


\begin{proof}
Since $(x^*,y^*)$ is a bi-local solution of (BP), we know that $x^*$ is a local solution of (SP) and $(x^*,y^*,\mu^*,\xi^*)$ is a local solution of (FP) from Theorem \ref{prop-equal BP CP}, we obtain the second-order necessary optimality conditions of (SP) and (FP) from \cite{Nocedal 99}, then (i) and (ii) hold.

We now show that (i) and (ii) are equivalent. Assume that for any $d\in \mathcal{C}(x^*,y^*,\mu^*,\xi^*)$,
it follows from the proof of the Proposition \ref{prop 4-1} that $Ad=0$, i.e.,
\begin{gather}\label{4-7-1}
\mathcal{J}_xH(x^*,y^*)d_x+\mathcal{J}_yH(x^*,y^*)d_y=0, \\
%\label{4-7}
\quad d_y=\mathcal{J}y(x^*)d_x,\quad d_{\mu}=\mathcal{J}\mu(x^*)d_x,\quad d_{\xi}=\mathcal{J}\xi(x^*)d_x\notag,
\end{gather}
then substituting $d_y=\mathcal{J}y(x^*)d_x$
%taking $(\ref{4-7})$
into $(\ref{4-7-1})$ and $(\ref{critical cone-CP})$, we can derive that $ d_x\in \mathcal{C}(x^*) $. Conversely for any $d_x\in \mathcal{C}(x^*)$, let $d:=U(x^*)d_x$, then $d\in \mathcal{C}(x^*,y^*,\mu^*,\xi^*) $ from (\ref{critical cone-SP}). %Therefore $d\in\mathcal{C}(x^*,y^*,\mu^*,\xi^*) \leftrightarrow d_x\in \mathcal{C}(x^*) $.

 It follows from Theorem \ref{thm 4-1} that $(\lambda_H^*,\lambda_G^*)\in \Lambda(x^*)$ if and only if
 $\lambda^*=(\lambda_H^*,\lambda_G^*,\lambda_{\mathcal{L}}^*,\lambda_h^*,\lambda_g^*)\in\Lambda(x^*,y^*,\mu^*,\xi^*)$, where
% $\lambda^*=(\lambda_H^*,\lambda_G^*,\lambda_{\mathcal{L}}^*,\lambda_h^*,\lambda_g^*)$ and
 $(\lambda_{\mathcal{L}}^*,\lambda_h^*,\lambda_g^*)$ is defined by (\ref{lambda L-h-g x*}). Thus from Lemma \ref{lemma 4-1},
\begin{align}
\max_{(\lambda_H^*,\lambda_G^*)\in \Lambda(x^*)}&\left\{d_x^T \nabla_{xx}^2L^{SP}(x^*,\lambda_H^*,\lambda_G^*)d_x\right\}\nonumber\\
=\max_{\lambda^*\in\Lambda(x^*,y^*,\mu^*,\xi^*)}&\left\{d_x^T U(x^*)^T
 \nabla_{uu}^2L^{FP}(u^*;\lambda^*)
    U(x^*)d_x\right\}\nonumber\\
 =\max_{\lambda^*\in\Lambda(x^*,y^*,\mu^*,\xi^*)}&\left\{ d^T
 \nabla_{uu}^2L^{FP}(u^*;\lambda^*)
     d\right\}\label{4-8}.
\end{align}
 Therefore (i) and (ii) are equivalent. We complete the proof.
\end{proof}

\begin{theorem}[Second-order Sufficient Optimality Conditions]\label{ssosc}
 Let $\left(x^*, y^*\right) \in \mathbb{R}^n \times \mathbb{R}^m$ be a point around which $f,g,h$ are twice continuously differentiable and thrice continuously differentiable with respect to variable $y$ and $F,G,H$ are twice continuously differentiable.
Assume that $(x^*,y^*)\in \Phi$ and $y^* \in Y(x^*)$. Let $(\mu^*,\xi^*) \in \mathbb{R}^r\times\mathbb{R}^s$.
Suppose that Problem $(P_{x^*})$ satisfies the Jacobian uniqueness conditions at $\left(y^*, \mu^*,\xi^*\right)$. Then the following statements are equivalent:
  \begin{itemize}
    \item[(i)] $\Lambda(x^*)\neq \emptyset$, and for every $d_x\in \mathcal{C}(x^*)\backslash \{0\}$,
\begin{equation}%\label{SOC-SP}
 \sup_{(\lambda_H^*,\lambda_G^*)\in \Lambda(x^*)}\left\{d_x^T \nabla_{xx}^2L^{SP}(x^*,\lambda_H^*,\lambda_G^*)d_x\right\}>0.\notag
 \end{equation}
    \item [(ii)] $\Lambda(x^*,y^*,\mu^*,\xi^*)\neq \emptyset$, and for every $d\in \mathcal{C}(x^*,y^*,\mu^*,\xi^*)\backslash \{0\}$,
\begin{equation}\label{SOC-CP}
 \sup_{\lambda^*\in \Lambda(x^*,y^*,\mu^*,\xi^*)}\left\{d^T \nabla_{uu}^2L^{FP}(u^*,\lambda^*)d \right\}>0.
 \end{equation}
  \end{itemize}
  Furthermore, if (i) or (ii) holds, there exist $\delta_2 \in(0, \delta_1), \varepsilon_2 \in(0, \varepsilon_1)$ (where $\delta_1$ and $\varepsilon_1$ are given by Lemma \ref{lemma-IFT}) and $\gamma>0$ such that for $x \in \boldsymbol{B}_{\delta_2}(x^*)$, $y(x) =argmin_y\{f(x,y):y\in Y(x)\cap \boldsymbol{B}_{\varepsilon_2}(y^*) \} $ and $(x,y(x))\in \Phi$,
\begin{equation}\label{equ-growth}
  F(x,y(x))\ge F(x^*,y^*)+\gamma \|x-x^*\|^2,
\end{equation}
which indicates that $\left(x^*, y^*\right)$ is a bi-local minimum point of (BP).
\end{theorem}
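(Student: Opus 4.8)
The plan is to treat the equivalence $(i)\Leftrightarrow(ii)$ and the growth estimate separately, leaning on the correspondences already set up in Theorems \ref{thm 4-1}, \ref{thm 4-2} and Lemma \ref{lemma 4-1}. For the equivalence, I would first recall that the affine map $(\lambda_H^*,\lambda_G^*)\mapsto\lambda^*=(\lambda_H^*,\lambda_G^*,\lambda_{\mathcal{L}}^*,\lambda_h^*,\lambda_g^*)$ defined by (\ref{lambda L-h-g x*}) is, thanks to the invertibility of $K(x^*)$ from Proposition \ref{prop3.1}, a bijection between $\Lambda(x^*)$ and $\Lambda(x^*,y^*,\mu^*,\xi^*)$; this is exactly the content of the equivalence part of Theorem \ref{thm 4-1} and uses neither the MFCQ nor compactness, so in particular $\Lambda(x^*)\neq\emptyset\Leftrightarrow\Lambda(x^*,y^*,\mu^*,\xi^*)\neq\emptyset$. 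Next, exactly as in the proof of Theorem \ref{thm 4-2}, the injective linear map $d_x\mapsto d:=U(x^*)d_x$ carries $\mathcal{C}(x^*)$ onto $\mathcal{C}(x^*,y^*,\mu^*,\xi^*)$ (using $Ad=0\Leftrightarrow(d_y,d_\mu,d_\xi)=(\mathcal{J}y(x^*),\mathcal{J}\mu(x^*),\mathcal{J}\xi(x^*))d_x$ from the proof of Proposition \ref{prop 4-1}) and sends only $0$ to $0$. Combining these with the identity $d_x^T\nabla_{xx}^2L^{SP}(x^*,\lambda_H^*,\lambda_G^*)d_x=d^T\nabla_{uu}^2L^{FP}(u^*,\lambda^*)d$ furnished by (\ref{nabla xx L SP-FP x*}) (valid for every $(\lambda_H^*,\lambda_G^*)$ with $\lambda^*$ built via (\ref{lambda L-h-g x*})), one gets $\sup_{(\lambda_H^*,\lambda_G^*)\in\Lambda(x^*)}d_x^T\nabla_{xx}^2L^{SP}d_x=\sup_{\lambda^*\in\Lambda(x^*,y^*,\mu^*,\xi^*)}d^T\nabla_{uu}^2L^{FP}d$, and letting $d_x$ range over $\mathcal{C}(x^*)\setminus\{0\}$ (equivalently $d$ over $\mathcal{C}(x^*,y^*,\mu^*,\xi^*)\setminus\{0\}$) gives $(i)\Leftrightarrow(ii)$.

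For the \textbf{furthermore} part, assume $(i)$ (hence $(ii)$). Since $\Lambda(x^*)\neq\emptyset$, $x^*$ is a KKT point of (SP), and $(i)$ is precisely the multiplier-set form of the second-order sufficient condition for (SP) along its critical cone $\mathcal{C}(x^*)$; by the classical second-order sufficiency theorem for nonlinear programs, there are $\tilde\delta>0$ and $\gamma>0$ such that $F(x,y(x))\ge F(x^*,y^*)+\gamma\|x-x^*\|^2$ for every $x\in\boldsymbol{B}_{\tilde\delta}(x^*)$ with $(x,y(x))\in\Phi$, where $y(\cdot)$ is the implicit solution map of Lemma \ref{lemma-IFT}. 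It remains to match the restricted lower-level problem appearing in (\ref{equ-growth}) with $y(\cdot)$: by Lemma \ref{lemma-IFT}, after shrinking the radii $\delta_1,\varepsilon_1$ to some $\delta_2\in(0,\min\{\delta_1,\tilde\delta\})$ and $\varepsilon_2\in(0,\varepsilon_1)$ — using that the Jacobian uniqueness conditions, and hence a quadratic growth of $f(x,\cdot)$ around $y(x)$ on $Y(x)$, hold uniformly for $x$ near $x^*$ — one obtains that for each $x\in\boldsymbol{B}_{\delta_2}(x^*)$ the value $y(x)$ lies in $\boldsymbol{B}_{\varepsilon_2}(y^*)$ and is the \emph{unique} point of ${\rm argmin}_z\{f(x,z):z\in Y(x)\cap\boldsymbol{B}_{\varepsilon_2}(y^*)\}$. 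Thus the hypotheses in (\ref{equ-growth}) force $y=y(x)$, and (\ref{equ-growth}) follows from the growth bound above.

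Finally, to conclude that $(x^*,y^*)$ is a bi-local minimum of (BP) in the sense of Definition \ref{def 2.1}, I would take $\delta_0:=\delta_2$ and $\eta(\delta):=\min\{\varepsilon_2,\delta\}$, so $\eta(\delta)\to0$ as $\delta\to0$. Since $y(x^*)=y^*$ is the unique minimizer of $f(x^*,\cdot)$ over $Y(x^*)\cap\boldsymbol{B}_{\varepsilon_2}(y^*)\supseteq Y(x^*)\cap\boldsymbol{B}_{\eta(\delta)}(y^*)$, the lower-level requirement $f(x^*,y^*)\le f(x^*,z)$ holds; and for any $(x,y)\in\Phi\cap[\boldsymbol{B}_{\delta}(x^*)\times\boldsymbol{B}_{\eta(\delta)}(y^*)]$ with $y\in{\rm argmin}_z\{f(x,z):z\in Y(x)\cap\boldsymbol{B}_{\eta(\delta)}(y^*)\}$, the uniqueness above gives $y=y(x)$, so $(x,y(x))\in\Phi$, $x\in\boldsymbol{B}_{\delta_2}(x^*)$, and (\ref{equ-growth}) yields $F(x,y)\ge F(x^*,y^*)+\gamma\|x-x^*\|^2\ge F(x^*,y^*)$. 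The step I expect to be the real obstacle is this last matching: upgrading the pointwise Jacobian uniqueness conditions supplied by Lemma \ref{lemma-IFT} into a \emph{uniform} quadratic-growth estimate for the family $\{(P_x)\}$ near $x^*$, so that $y(x)$ is genuinely the unique constrained minimizer and the artificial boundary $\{\,\|z-y^*\|=\varepsilon_2\,\}$ produces no competitor; everything else is either the bookkeeping of the two bijections together with the identity (\ref{nabla xx L SP-FP x*}), or an invocation of the classical nonlinear-programming second-order sufficiency theorem applied to (SP).
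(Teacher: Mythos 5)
Your argument follows the paper's own proof essentially step for step: the equivalence $(i)\Leftrightarrow(ii)$ is obtained, exactly as in the paper, from the multiplier correspondence (\ref{lambda L-h-g x*}) (invertibility of $K(x^*)$), the critical-cone correspondence $d=U(x^*)d_x$ taken from the proof of Theorem \ref{thm 4-2}, and the Hessian identity (\ref{nabla xx L SP-FP x*}) summarized in (\ref{4-8}); and the quadratic growth (\ref{equ-growth}) is deduced, as in the paper, by applying the classical second-order sufficiency/quadratic-growth theorem for nonlinear programs to (SP) and invoking Lemma \ref{lemma-IFT} to identify $y(x)$ with the restricted lower-level argmin. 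You actually go further than the paper in spelling out the verification of Definition \ref{def 2.1}, and you correctly flag the only genuinely delicate point (that $y(x)$ is the unique minimizer of $f(x,\cdot)$ over $Y(x)\cap\boldsymbol{B}_{\varepsilon_2}(y^*)$ rather than merely a strict local minimizer), which the paper passes over.

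One small slip in your added verification of bi-local minimality: the choice $\eta(\delta)=\min\{\varepsilon_2,\delta\}$ does not guarantee $y(x)\in\boldsymbol{B}_{\eta(\delta)}(y^*)$ for all $x\in\boldsymbol{B}_{\delta}(x^*)$ (the local Lipschitz modulus of $y(\cdot)$ may exceed $1$), and without that inclusion the set ${\rm argmin}_z\{f(x,z):z\in Y(x)\cap\boldsymbol{B}_{\eta(\delta)}(y^*)\}$ may be attained at a boundary point different from $y(x)$, to which your growth estimate does not apply. The repair is the device already used in the proof of Theorem \ref{loc}: set $\eta(\delta):=\sup_{\|x-x^*\|\le\delta}\|y(x)-y^*\|$, which tends to $0$ by continuity of $y(\cdot)$, satisfies $y(x)\in\boldsymbol{B}_{\eta(\delta)}(y^*)$ by construction, and can be kept below $\varepsilon_2$ by shrinking $\delta_0$; with this choice the rest of your argument goes through unchanged.
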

\begin{proof}
Since the Jacobian uniqueness conditions hold at $(y^*,\mu^*,\xi^*)$ for Problem $(P_{x^*})$, we can know that
$(\lambda_H^*,\lambda_G^*)\in \Lambda(x^*)$ if and only if
$\lambda^*\in\Lambda(x^*,y^*,\mu^*,\xi^*)$, $d_x\in \mathcal{C}(x^*)$ if and only if $d=U(x^*)d_x\in\mathcal{C}(x^*,y^*,\mu^*,\xi^*)$ from the proof of Theorem \ref{thm 4-2}. Then we obtain (i) and (ii) are equivalent by (\ref{4-8}).

Suppose that (i) holds. From Lemma \ref{lemma-IFT}, we know that $y(x)$ is the unique local solution of Problem ($P_{x}$) for $x\in \boldsymbol{B}_{\delta_1}(x^*)$. It follows from \cite{Bonnans} that the second-order sufficient optimality conditions at $x^*$ guarantee the local quadratic growth condition. Then there exist $\delta_2\in (0,\delta_1)$, $\varepsilon_2 \in(0, \varepsilon_1)$ and $\gamma>0$ such that for any $x\in \boldsymbol{B}_{\delta_2}(x^*)$ and $(x,y(x))\in \Phi$,
  \begin{equation*}
     F(x,y(x))\ge F(x^*,y^*)+\gamma\| x-x^*\|^2,
  \end{equation*}
  where $ y(x)={\rm argmin}_y\{f(x,y):y\in Y(x) \cap \boldsymbol{B}_{\varepsilon_2}(y^*)\}$.
Thus we can obtain (\ref{equ-growth}). The proof is complete.
\end{proof}

 The item $\nabla^2 y(x)$ in the second-order derivative of $L^{SP}$ in Lemma \ref{lemma 4-1} can not be calculated due to the implicit structure of $y(x)$. Consequently, the second-order optimality conditions in the Theorem \ref{ssosc} (i) are hardly to check.
 However, it is worth noticing that
the second-order derivative of $L^{FP}$ only contains the second-order derivatives of the defining functions in (BP) and the corresponding Lagrange multipliers.
As a result, the second-order optimality conditions in the Theorem \ref{ssosc} (ii)  can be used to obtain bi-local solutions of (BP) and guarantee the convergence rates of many numerical algorithms, such as the classical augmented Lagrangian algorithm, which is stated in the rest of this section.
\subsection{The convergence rate of the augmented Lagrangian method}\label{S 4.3}
The augmented Lagrangian method is known as a method of multipliers for solving nonlinear optimization problems (NLP) with constraints. It was proposed by Hestenes \cite{Hestenes 69} and Powell \cite{Powell 69} for solving equality constrained optimization problems and was generalized by Rockafellar \cite{Rockafellar 73} to optimization problems with both equality and inequality constraints. In \cite[Chapter 3]{Bertsekas 82}, Bertsekas established local convergence of the augmented Lagrangian method for NLP if the LICQ, strict complementarity condition and SOSC are satisfied. The convergence rate is Q-linear when the penalty parameter is finite and Q-superlinear when penalty parameter  converges to infinity.
Without strict complementarity, a stronger version of second-order sufficiency is employed to establish the linear convergence rate, see \cite{Conn,Contesse-Becker,Ito}. A remarkable progress was achieved by Fern$\acute{\text{a}}$ndez and Solodov in \cite{Fernandez 12} establishing the local convergence rate of the classical augmented Lagrangian algorithm under the SOSC only.

In this subsection, we apply the augmented Lagrangian method to the problem (FP), under the SOSC of (FP), which is defined in (\ref{SOC-CP}) of Theorem \ref{ssosc} (ii), if the dual starting point is close to a multiplier satisfying SOSC, then the iteration points converge to a bi-local solution of (BP) with the Q-linear convergence rate when the penalty parameter  is finite and Q-superlinear convergence when penalty parameter  converges to infinity.

 The augmented Lagrangian function for (FP) is defined by $L_{\rho}^{FP}:\mathbb{R}^n\times\mathbb{R}^m\times\mathbb{R}^r\times\mathbb{R}^s\times\mathbb{R}^{p+q+m+r+s}\rightarrow \mathbb{R}$,
 \begin{align}\label{augmented L}
  L_{\rho}^{FP}&(x,y,\mu,\xi;\lambda)=F(x,y)+\lambda_{H}^TH(x,y)+\frac{\rho}{2}\|H(x,y)\|^2+\frac{1}{2\rho}
  \left(\|\max\{0,\lambda_{G}+\rho G(x,y)\}\|^2\right.
\notag\\
  &\left.-\|\lambda_G\|^2\right)
  +\lambda_{\mathcal{L}}^T\nabla_y \mathcal{L}(x;y,\mu,\xi)+\frac{\rho}{2}\|\nabla_y \mathcal{L}(x;y,\mu,\xi)\|^2+\lambda_{h}^Th(x,y)+\frac{\rho}{2}\|h(x,y)\|^2
 \notag\\
 & +\lambda_{g}^T(g(x,y)-\Pi_{\mathbb{R}_{-}^s}(g(x,y)+\xi))+\frac{\rho}{2}\|g(x,y)
  -\Pi_{\mathbb{R}_{-}^s}(g(x,y)+\xi)\|^2 .\notag
 \end{align}
 To simplify the notation, we define $u^k=(x^k,y^k,\mu^k,\xi^k)$, $K=\{0\}^p \times\mathbb{R}^q_{-}\times\{0\}^m \times\{0\}^r \times\{0\}^s$ and the
 constraint region of (FP) can be formulated as:
\begin{eqnarray*}
\widetilde{G}(u):=\left(
\begin{array}{c}
  H(x,y) \\
  G(x,y)\\
 \nabla_y \mathcal{L}(x;y,\mu,\xi) \\
  h(x,y) \\
  g(x,y)-\Pi_{\mathbb{R}_{-}^s}(g(x,y)+\xi)
\end{array}
\right)\in K.
\end{eqnarray*}
Then the violation of the KKT conditions (\ref{4-6}) is measured by the natural residual $\sigma$ : $\mathbb{R}^{n+m+r+s} \times \mathbb{R}^{p+q+m+r+s} \rightarrow \mathbb{R}_{+}$,
$$
\sigma(u,\lambda)=\left\|\left[\begin{array}{c}
\nabla_u L^{FP}(u;\lambda) \\
\lambda-\Pi_{K^{\circ}}(\lambda+\widetilde{G}(u))
\end{array}\right]\right\| ,
$$
where $K^{\circ}=\mathbb{R}^{p} \times\mathbb{R}^q_{+}\times\mathbb{R}^m\times\mathbb{R}^r\times\mathbb{R}^s$ is the polar cone of $K$.

 For an arbitrary constant $\hat{c} \in(0,+\infty)$, we consider the following iterative procedure. Given a current iterate $(u^k,\lambda^k) \in \mathbb{R}^{n+m+r+s}\times K^{\circ}$ with $\sigma\left(u^k,\lambda^k\right)>0$, a penalty parameter $\rho_k>0$, and an approximation parameter $\epsilon_k \geq 0$,
 \begin{eqnarray}
&&\text{find}\enspace u^{k+1}\enspace \text{satisfying}\enspace \left\|\nabla_{u}L_{\rho}^{FP}(u^{k+1};\lambda^k) \right\| \leq \epsilon_k,\label{5-1}\\
&&\left\|\left[\begin{array}{c}
u^{k+1}-u^k \\
\Pi_{K^{\circ}}\left(\lambda^k+\rho_k \widetilde{G}(u^{k+1})\right)-\lambda^k
\end{array}\right]\right\| \leq \hat{c} \sigma(u^k, \lambda^k),\\
&&\text { and set } \enskip\lambda^{k+1}=\Pi_{K^{\circ}}\left(\lambda^k+\rho_k \widetilde{G}(u^{k+1})\right) \text {. }\label{5-2}
 \end{eqnarray}

The following theorem reveals that the iteration sequence which generates from the procedures (\ref{5-1})-(\ref{5-2}) converges to a bi-local solution of (BP). The convergence rate under the  SOSC  directly from \cite[Theorem 3.4]{Fernandez 12}. Note that our SOSC (\ref{SOC-CP}) can be verifiable.

\begin{theorem}
  Let $(u^*,\lambda^*)$ satisfy SOSC (\ref{SOC-CP}), and let $\psi: \mathbb{R}_+\rightarrow \mathbb{R}_+ $ be any function $\psi(t) = o(t)$.
Then there exist $\bar{\epsilon},\bar{\rho}>0$ such that if $(u^0,\lambda^0)\in \boldmath{B}_{\bar{\epsilon}}(u^*,\lambda^*)\cap (\mathbb{R}^{n+m+r+s}\times K^{\circ})$ and the sequence $(u^k,\lambda^k)$ is generated according to (\ref{5-1})-(\ref{5-2}) with $\hat{c} $ large enough, $\rho_k\ge \bar{\rho}$ for all k
and $\epsilon_k=\psi(\sigma(u^k, \lambda^k))$, the following assertions hold:
\begin{itemize}
  \item[(i)]The sequence $\{(u^k,\lambda^k)\}$ is well-defined and converges to $(u^*,\bar{\lambda})$, with some $\bar{\lambda}\in \Lambda(u^*)$.
  \item[(ii)] For any $q\in(0, 1)$ there exists $\bar{\rho}_q$ such that if $\rho_k\ge\bar{\rho}_q$ for all k, then the convergence rate of $\{(u^k,\lambda^k)\}$ to $(u^*,\bar{\lambda})$ is Q-linear with quotient q.
  \item[(iii)]If $\rho_k \rightarrow +\infty$, the convergence rate is Q-superlinear.
\end{itemize}
\end{theorem}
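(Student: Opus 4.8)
The plan is to reduce the statement to \cite[Theorem 3.4]{Fernandez 12} applied to (FP), viewed as a genuine $C^2$ nonlinear program near $u^*=(x^*,y^*,\mu^*,\xi^*)$. First I would observe that the Jacobian uniqueness conditions (Definition \ref{jaco}) are what make this legitimate: the strict complementarity condition gives $g_i(x^*,y^*)+\xi_i^*\neq 0$ for every $i$, so $w\mapsto\Pi_{\mathbb{R}_-^s}(w)$ is affine (hence $C^\infty$) near $g(x^*,y^*)+\xi^*$, and therefore the constraint mapping $\widetilde G$ is twice continuously differentiable in a neighborhood of $u^*$. Since $K=\{0\}^p\times\mathbb{R}^q_-\times\{0\}^m\times\{0\}^r\times\{0\}^s$ is a polyhedral convex cone, the system $\widetilde G(u)\in K$ is locally an ordinary equality/inequality‑constrained smooth NLP, the function $L^{FP}_\rho$ in \eqref{augmented L} is exactly the classical Hestenes--Powell--Rockafellar augmented Lagrangian for it, and the update rule \eqref{5-1}--\eqref{5-2} together with the natural residual $\sigma$ is precisely the inexact scheme studied in \cite{Fernandez 12}.

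Next I would match the hypotheses. By definition, ``$(u^*,\lambda^*)$ satisfies SOSC \eqref{SOC-CP}'' means that $\Lambda(x^*,y^*,\mu^*,\xi^*)\neq\emptyset$ — so $u^*$ is a KKT point of (FP) with multiplier set $\Lambda(u^*)$ — and that $\sup_{\lambda\in\Lambda(u^*)} d^\top\nabla^2_{uu}L^{FP}(u^*,\lambda)d>0$ for every nonzero $d\in\mathcal{C}(x^*,y^*,\mu^*,\xi^*)$. Since $\mathcal{C}(x^*,y^*,\mu^*,\xi^*)$ in \eqref{critical cone-CP} is exactly the critical cone of the NLP (FP) at $u^*$, this is word for word the second-order sufficient condition under which \cite[Theorem 3.4]{Fernandez 12} guarantees local convergence and the stated rate estimates; notably that theorem requires no constraint qualification, only the SOSC, which is all we assume here.

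Applying \cite[Theorem 3.4]{Fernandez 12} then delivers the three assertions directly: there are $\bar\epsilon,\bar\rho>0$ such that, from any starting point $(u^0,\lambda^0)\in\boldsymbol{B}_{\bar\epsilon}(u^*,\lambda^*)\cap(\mathbb{R}^{n+m+r+s}\times K^{\circ})$, with $\hat c$ large enough, $\rho_k\ge\bar\rho$, and $\epsilon_k=\psi(\sigma(u^k,\lambda^k))=o(\sigma(u^k,\lambda^k))$, the iterates $(u^k,\lambda^k)$ are well defined (or the process terminates finitely at a point with $\sigma=0$, which is already a KKT point of (FP)) and converge to $(u^*,\bar\lambda)$ for some $\bar\lambda\in\Lambda(u^*)$, giving (i); for each $q\in(0,1)$ there is $\bar\rho_q$ so that $\rho_k\ge\bar\rho_q$ forces Q-linear convergence with quotient $q$, giving (ii); and $\rho_k\to+\infty$ upgrades the rate to Q-superlinear, giving (iii).

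It remains to justify the phrase ``converges to a bi-local solution of (BP)''. Because $u^k=(x^k,y^k,\mu^k,\xi^k)\to u^*=(x^*,y^*,\mu^*,\xi^*)$, and \eqref{SOC-CP} holds under the Jacobian uniqueness conditions, Theorem \ref{ssosc} (equivalently, the chain of equivalences in Theorem \ref{prop-equal BP CP}: a local solution of (FP) corresponds to a bi-local solution of (BP)) shows that $(x^*,y^*)$ is a bi-local minimum point of (BP); hence the $(x,y)$-components of the iterates converge to a bi-local solution of (BP). As for the main obstacle: there is no substantively new estimate to be proved — the one point needing genuine care is the bookkeeping that identifies every object in \eqref{5-1}--\eqref{5-2} and in \eqref{SOC-CP} with the corresponding object in \cite{Fernandez 12} (same augmented Lagrangian, same residual, same critical cone, same smoothness of $\widetilde G$), the smoothness being precisely where strict complementarity is used so that the smooth theory of \cite{Fernandez 12} is applicable at all.
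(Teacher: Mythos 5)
Your proposal is correct and follows essentially the same route as the paper, whose entire argument for this theorem is the direct invocation of \cite[Theorem 3.4]{Fernandez 12} applied to (FP) viewed as a smooth NLP; your added observation that strict complementarity makes $\widetilde G$ twice continuously differentiable near $u^*$ is precisely the point the paper leaves implicit. The one caution is that the SOSC in \cite{Fernandez 12} is posed at a \emph{fixed} multiplier $\bar\lambda$ (with the dual starting point required to be near that $\bar\lambda$), so the hypothesis ``$(u^*,\lambda^*)$ satisfies SOSC'' should be read as the pointwise condition $d^{T}\nabla^2_{uu}L^{FP}(u^*,\lambda^*)d>0$ for all nonzero $d$ in the critical cone at the given $\lambda^*$, rather than the sup form literally displayed in (\ref{SOC-CP}), which is a weaker condition and not word-for-word the hypothesis of their theorem.
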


\section{Conclusions}
In this paper, we introduce the concept of bi-local solution for bilevel programs. This solution is a variant of the traditional local solution, and it is particularly useful when the lower level problem is not convex. By analyzing the Jacobian uniqueness conditions for the lower level problem, we establish that the local solutions of (SP) and (FP) can be regarded as bi-local solutions for (BP). Additionally, we provide optimality conditions for the bi-local solutions of bilevel programs, including first-order necessary conditions and second-order necessary and sufficient conditions. These second-order conditions are easy to verify. Finally, we demonstrate the application of our theoretical results to the classical augmented Lagrangian algorithm and derive a primal-dual Q-linear convergence rate.

%-----------------------------------------

%\vspace{1cm}
%%\noindent \textbf{Acknowledgments} The author would like
%\noindent \textbf{Acknowledgments}
%
%\noindent \small{This work was partially supported by }

%----------------------------------------------------------------
%    Bibliographies can be prepared with BibTeX using amsplain,
%    amsalpha, or (for "historical" overviews) natbib style.
%\bibliographystyle{plain}
%%%    Insert the bibliography data here.
%\bibliography{ref.bib}

%-------------------------------------------------------
%\appendices

%------------------------------------------------
%\section{Some useful Lemmas}\label{app:lem}

\end{document}